\newtheorem{theorem}{Theorem}[section]
\newtheorem{lemma}[theorem]{Lemma}
\newtheorem{remark}[theorem]{Remark}
\numberwithin{theorem}{section}
\begin{document}
\title[Unbounded local completely contractive maps]{Unbounded local
completely contractive maps}
\author{Maria Joi\c{t}a}
\address{Department of Mathematics, Faculty of Applied Sciences, University
Politehnica of Bucharest, 313 Spl. Independentei, 060042, Bucharest, Romania
and, Simion Stoilow Institute of Mathematics of the Romanian Academy, P.O.
Box 1-764, 014700, Bucharest, Romania}
\email{mjoita@fmi.unibuc.ro and maria.joita@mathem.pub.ro}
\urladdr{http://sites.google.com/a/g.unibuc.ro/maria-joita/}
\subjclass[2000]{ 46L05}
\keywords{locally $C^{\ast }$-algebras, quantized domain, local completely
positive maps, local completely contractive maps}
\thanks{This work was partially supported by a grant of the Ministry of
Research, Innovation and\\
Digitization, CNCS/CCCDI--UEFISCDI, project number
PN-III-P4-ID-PCE-2020-0458, within PNCDI III..}

\begin{abstract}
We prove a local convex version of Arveson's extension theorem and of
Wittstock's extension theorem. Also we prove a Stinespring type theorem for
unbounded local completely contractive maps.
\end{abstract}

\maketitle

\section{Introduction}

A locally $C^{\ast }$-algebra is a complete Hausdorff complex topological $%
\ast $-algebra whose topology is determined by an upward filtered family of $%
C^{\ast }$-seminorms. So the notion of locally $C^{\ast }$-algebra can be
regarded as a generalization of the notion of $C^{\ast }$-algebra. The
closed $\ast $-subalgebras of $B(\mathcal{H})$, the algebra of all bounded
linear operators on a Hilbert space $\mathcal{H}$, are concrete models for $%
C^{\ast }$-algebras and the $\ast $-subalgebras of unbounded linear
operators on a Hilbert space are concrete models for locally $C^{\ast }$%
-algebras. If $\mathcal{E}$ is a quantized domain in a Hilbert space $%
\mathcal{H}$, then $C^{\ast }(\mathcal{D}_{\mathcal{E}})$, the $\ast $%
-algebra of all densely defined linear operators on $\mathcal{H}$ that
verify certain conditions, is a locally $C^{\ast }$-algebra. For every
locally $C^{\ast }$-algebra $\mathcal{A}$ there is a quantized domain $%
\mathcal{E}$ in a Hilbert space $\mathcal{H}$ and a local isometric $\ast $%
-homomorphism $\pi :\mathcal{A\rightarrow }C^{\ast }(\mathcal{D}_{\mathcal{E}%
})$ \cite[Theorem 7.2]{D}. This result can be regarded as an unbounded
analog of the Ghelfand-Naimark theorem. In the literature, the locally $%
C^{\ast }$-algebras are studied under different names like pro-$C^{\ast }$%
-algebras (D. Voiculescu, N.C. Philips), $LMC^{\ast }$-algebras (G. Lassner,
K. Schm\"{u}dgen), $b^{\ast }$-algebras (C. Apostol), multinormed $C^{\ast }$%
-algebras (A. Dosiev). The term locally $C^{\ast }$-algebra is due to A.
Inoue \cite{I}.

An element $a$ in a locally $C^{\ast }$-algebra $\mathcal{A}$ whose topology
is given by the family of $C^{\ast }$-seminorms $\{p_{\lambda }\}_{\lambda
\in \Lambda }$ is positive if there exists $b\in \mathcal{A}$ such that $%
a=bb^{\ast }$ and it is local positive if there exist $b,c\in \mathcal{A}$
such that $a=bb^{\ast }+c$ with $p_{\lambda }\left( c\right) =0$ for some $%
\lambda \in \Lambda $. Thus, the notion of completely positive map,
respectively local completely positive map appeared naturally while studying
linear maps between locally $C^{\ast }$-algebras.

Dosiev \cite{D} proved a Stinespring type theorem for unbounded local
completely contractive and local completely positive ($\mathcal{CCP}$)$\ $%
maps on unital locally $C^{\ast }$-algebras. Bhat, Ghatak and Kumar \cite%
{BGK} showed that the minimal Stinespring dilation associated to an
unbounded local $\mathcal{CCP}$-map is unique up to unitary equivalence and
introduced a partial order relation on the set of all unbounded local $%
\mathcal{CCP}$-maps in terms of their Stinespring dilations. In \cite{MJ1},
we proved a structure theorem for unbounded local $\mathcal{CCP}$-maps of
order zero and in \cite{MJ3} we proved some factorization properties for
unbounded local positive maps. In this paper, we prove a local convex
version of Arveson's extension theorem \cite{A}. As in the case of
completely bounded maps on $C^{\ast }$-algebras, we show that an unbounded
local $\mathcal{CC}$-map can be realized as the off-diagonal corners of an
unbounded local $\mathcal{CP}$-map. Using the off-diagonal technique, we
prove a local convex version of Wittstock's extension theorem and a
Stinespring type theorem for unbounded local $\mathcal{CC}$-maps. Other
local convex versions of Arverson's extension theorem and of Wittstock's
extension theorem and a structure theorem of type Stinespring can be found
in \cite{DA}.\textbf{\ }It is well-known that Arverson's extension theorem
palys a big role in the characterization of nuclear $C^{\ast }$-algebras in
terms of completly positive maps. The work developed in this paper is
essentially used in a forthcoming paper, which introduces the notion of
local nuclear multinormed $C^{\ast }$-algebras and investigates their
properties.

\section{Preliminaries}

A \textit{locally }$C^{\ast }$\textit{-algebra }is a complete Hausdorff
complex topological $\ast $-algebra $\mathcal{A}$ whose topology is
determined by an upward filtered family $\{p_{\lambda }\}_{\lambda \in
\Lambda }\ $of $C^{\ast }$-seminorms defined on $\mathcal{A}$.

A locally $C^{\ast }$-algebra $\mathcal{A}$ can be realized as a projective
limit of an inverse family of $C^{\ast }$-algebras. If $\mathcal{A}$ is a
locally $C^{\ast }$-algebra\textit{\ }with the topology determined by the
family of $C^{\ast }$-seminorms $\{p_{\lambda }\}_{\lambda \in \Lambda }$,
for each $\lambda \in \Lambda $, $\mathcal{I}_{\lambda }=\{a\in \mathcal{A}%
;p_{\lambda }\left( a\right) =0\}$ is a closed two sided $\ast $-ideal in $%
\mathcal{A}$ and $\mathcal{A}_{\lambda }=\mathcal{A}/\mathcal{I}_{\lambda }$
is a $C^{\ast }$-algebra with respect to the $C^{\ast }$-norm induced by $%
p_{\lambda }$. The canonical quotient $\ast $-morphism from $\mathcal{A}$ to 
$\mathcal{A}_{\lambda }$ is denoted by $\pi _{\lambda }^{\mathcal{A}}$. For
each $\lambda _{1},\lambda _{2}\in \Lambda $ with $\lambda _{1}\leq \lambda
_{2}$, there is a canonical surjective $\ast $-morphism $\pi _{\lambda
_{2}\lambda _{1}}^{\mathcal{A}}:$ $\mathcal{A}_{\lambda _{2}}\rightarrow 
\mathcal{A}_{\lambda _{1}},$ defined by $\pi _{\lambda _{2}\lambda _{1}}^{%
\mathcal{A}}\left( a+\mathcal{I}_{\lambda _{2}}\right) =a+\mathcal{I}%
_{\lambda _{1}}$ for $a\in \mathcal{A}$. Then, $\{\mathcal{A}_{\lambda },\pi
_{\lambda _{2}\lambda _{1}}^{\mathcal{A}}\}$\ forms an inverse system of $%
C^{\ast }$-algebras, since $\pi _{\lambda _{1}}^{\mathcal{A}}=$ $\pi
_{\lambda _{2}\lambda _{1}}^{\mathcal{A}}\circ \pi _{\lambda _{2}}^{\mathcal{%
A}}$ whenever $\lambda _{1}\leq \lambda _{2}$. The projective limit%
\begin{equation*}
\lim\limits_{\underset{\lambda }{\leftarrow }}\mathcal{A}_{\lambda
}=\{\left( a_{\lambda }\right) _{\lambda \in \Lambda }\in
\tprod\limits_{\lambda \in \Lambda }\mathcal{A}_{\lambda };\pi _{\lambda
_{2}\lambda _{1}}^{\mathcal{A}}\left( a_{\lambda _{2}}\right) =a_{\lambda
_{1}}\text{ whenever }\lambda _{1}\leq \lambda _{2},\lambda _{1},\lambda
_{2}\in \Lambda \}
\end{equation*}%
of the inverse system of $C^{\ast }$-algebras $\{\mathcal{A}_{\lambda },\pi
_{\lambda _{2}\lambda _{1}}^{\mathcal{A}}\}$ is a locally $C^{\ast }$%
-algebra that can be identified with $\mathcal{A}$ by the map $a\mapsto
\left( \pi _{\lambda }^{\mathcal{A}}\left( a\right) \right) _{\lambda \in
\Lambda }$.

An element $a\in \mathcal{A}$ is \textit{self-adjoint} if $a^{\ast }=a$ and
it is\textit{\ positive} if $a=b^{\ast }b$ for some $b\in \mathcal{A}.$

An element $a\in \mathcal{A}$ is called \textit{local self-adjoint} if $%
a=a^{\ast }+c$, where $c\in \mathcal{A}$ such that $p_{\lambda }\left(
c\right) =0$ for some $\lambda \in \Lambda $, and we call $a$ as $\lambda $%
\textit{-self-adjoint,} and \textit{local positive} if $a=b^{\ast }b+c$
where $b,c\in $ $\mathcal{A}$ such that $p_{\lambda }\left( c\right) =0\ $
for some $\lambda \in \Lambda $, we call $a$ as $\lambda $\textit{-positive}
and note $a\geq _{\lambda }0$. We write, $a=_{\lambda }0$ whenever $%
p_{\lambda }\left( a\right) =0.$

Note that $a\in \mathcal{A}$ is local self-adjoint if and only if there is $%
\lambda \in \Lambda $ such that $\pi _{\lambda }^{\mathcal{A}}\left(
a\right) $ is self adjoint in $\mathcal{A}_{\lambda }$ and $a\in \mathcal{A}$
is local positive if and only if there is $\lambda \in \Lambda $ such that $%
\pi _{\lambda }^{\mathcal{A}}\left( a\right) $ is positive in $\mathcal{A}%
_{\lambda }.$

An element $a\in \mathcal{A}$ is \textit{bounded} if $\sup \{p_{\lambda
}\left( a\right) ;\lambda \in \Lambda \}<\infty .$ Then $b\left( \mathcal{A}%
\right) =\{a\in \mathcal{A};\left\Vert a\right\Vert _{\infty }=\sup
\{p_{\lambda }\left( a\right) ;\lambda \in \Lambda \}<\infty \}\ $is a $%
C^{\ast }$-algebra with respect to the $C^{\ast }$-norm $\left\Vert \cdot
\right\Vert _{\infty }.$ Moreover, $b\left( \mathcal{A}\right) $ is dense in 
$\mathcal{A}.$ For more details on locally $C^{\ast }$-algebras we refer the
reader to \cite{F,I,Ph}.

Throughout the paper, $\mathcal{H}$ is a complex Hilbert space and $B(%
\mathcal{H})$ is the algebra of all bounded linear operators on the Hilbert
space $\mathcal{H}$.

Let $(\Upsilon ,\leq )$ be a directed poset. A \textit{quantized domain in a
Hilbert space} $\mathcal{H}$ is a triple $\{\mathcal{H};\mathcal{E};\mathcal{%
D}_{\mathcal{E}}\}$, where $\mathcal{E}=\{\mathcal{H}_{\iota };\iota \in
\Upsilon \}$ is an upward filtered family of closed subspaces such that the
union space $\mathcal{D}_{\mathcal{E}}=\tbigcup\limits_{\iota \in \Upsilon }%
\mathcal{H}_{\iota }$ is dense in $\mathcal{H\ }$\cite{D}.

Let $\mathcal{E}=\{\mathcal{H}_{\iota };\iota \in \Upsilon \}$ be a
quantized domain in a Hilbert space $\mathcal{H}\ $and $\mathcal{F}=\{%
\mathcal{K}_{\iota };\iota \in \Upsilon \}$ be a quantized domain in a
Hilbert space $\mathcal{K}\ $. Then $\mathcal{E}\oplus \mathcal{F}=\left\{ 
\mathcal{H}_{\iota }\oplus \mathcal{K}_{\iota };\iota \in \Upsilon \right\}
\ $is a quantized domain in the Hilbert space $\mathcal{H}\oplus \mathcal{K}$
with the union space $\mathcal{D}_{\mathcal{E}\oplus \mathcal{F}}=\mathcal{D}%
_{\mathcal{E}}\oplus \mathcal{D}_{\mathcal{F}}$.$\ $For each $n,\ $we\ will
use the notations: $\underset{n}{\underbrace{\mathcal{E}\oplus ...\oplus 
\mathcal{E}}}=\mathcal{E}^{n}\ $and~$\underset{n}{\underbrace{\mathcal{D}_{%
\mathcal{E}}\oplus ...\oplus \mathcal{D}_{\mathcal{E}}}}\ =\mathcal{D}_{%
\mathcal{E}^{n}}.$

Given a linear operator $V:\mathcal{D}_{\mathcal{E}}\rightarrow \mathcal{K}$%
, we write $V(\mathcal{E})\subseteq \mathcal{F}$ if $V(\mathcal{H}_{\iota
})\subseteq \mathcal{K}_{\iota }$\ for all $\iota \in \Upsilon $.

Let $\mathcal{E}=\{\mathcal{H}_{\iota };\iota \in \Upsilon \}$ be a
quantized domain in a Hilbert space $\mathcal{H}$. The quantized family $%
\mathcal{E}=\{\mathcal{H}_{\iota };\iota \in \Upsilon \}$ determines an
upward filtered family $\{P_{\iota };\iota \in \Upsilon \}$ of projections
in $B(\mathcal{H})$, where $P_{\iota }$ is a projection onto $\mathcal{H}%
_{\iota }$. Let 
\begin{equation*}
C^{\ast }(\mathcal{D}_{\mathcal{E}})=\{T\in \mathcal{L}(\mathcal{D}_{%
\mathcal{E}});TP_{\iota }=P_{\iota }TP_{\iota }\in B(\mathcal{H})\text{ and }%
P_{\iota }T\subseteq TP_{\iota }\text{ for all }\iota \in \Upsilon \}
\end{equation*}%
where $\mathcal{L}(\mathcal{D}_{\mathcal{E}})$ is the collection of all
linear operators on $\mathcal{D}_{\mathcal{E}}$. If $T\in \mathcal{L}(%
\mathcal{D}_{\mathcal{E}})$, then $T\in C^{\ast }(\mathcal{D}_{\mathcal{E}})$
if and only if $T(\mathcal{H}_{\iota })\subseteq \mathcal{H}_{\iota },T(%
\mathcal{H}_{\iota }^{\bot }\cap \mathcal{D}_{\mathcal{E}})\subseteq 
\mathcal{H}_{\iota }^{\bot }\cap \mathcal{D}_{\mathcal{E}}$ and $\left.
T\right\vert _{\mathcal{H}_{\iota }}\in B(\mathcal{H}_{\iota })$ for all $%
\iota \in \Upsilon .\ $

If $T\in $ $\mathcal{L}(\mathcal{D}_{\mathcal{E}})$, then $T$ is a densely
defined linear operator on $\mathcal{H}$. The adjoint of $T$ is a linear map 
$T^{\bigstar }:$ dom$(T^{\bigstar })$ $\subseteq $ $\mathcal{H}\rightarrow 
\mathcal{H}$, where%
\begin{equation*}
\text{dom}(T^{\bigstar })=\{\xi \in \mathcal{K};\eta \rightarrow
\left\langle T\eta ,\xi \right\rangle \ \text{is continuous for every }\eta
\in \text{dom}(T)\}
\end{equation*}%
and satisfying $\left\langle T\eta ,\xi \right\rangle =\left\langle \eta
,T^{\bigstar }\xi \right\rangle $ for all $\xi \in $dom$(T^{\bigstar })$ and 
$\eta \in $dom$(T).$

If $T\in C^{\ast }(\mathcal{D}_{\mathcal{E}})$, then $\mathcal{D}_{\mathcal{E%
}}$ $\subseteq $ dom$(T^{\bigstar })$, and $\left. T^{\bigstar }\right\vert
_{\mathcal{D}_{\mathcal{E}}}\in C^{\ast }(\mathcal{D}_{\mathcal{E}})$. Let $%
T^{\ast }=\left. T^{\bigstar }\right\vert _{\mathcal{D}_{\mathcal{E}}}$. It
is easy to check that $C^{\ast }(\mathcal{D}_{\mathcal{E}})$ is a unital $%
\ast $-algebra. For each $\iota \in \Upsilon ,$ the map $\left\Vert \cdot
\right\Vert _{\iota }:C^{\ast }(\mathcal{D}_{\mathcal{E}})\rightarrow
\lbrack 0,\infty )$, 
\begin{equation*}
\left\Vert T\right\Vert _{\iota }=\left\Vert \left. T\right\vert _{\mathcal{H%
}_{\iota }}\right\Vert =\sup \{\left\Vert T\left( \xi \right) \right\Vert
;\xi \in \mathcal{H}_{\iota },\left\Vert \xi \right\Vert \leq 1\}
\end{equation*}%
is a $C^{\ast }$-seminorm on $C^{\ast }(\mathcal{D}_{\mathcal{E}})$.
Therefore, $C^{\ast }(\mathcal{D}_{\mathcal{E}})$ is a locally $C^{\ast }$%
-algebra with respect to the family of $C^{\ast }$-seminorms $\{\left\Vert
\cdot \right\Vert _{\iota }\}_{\iota \in \Upsilon }$, and  $b(C^{\ast }(%
\mathcal{D}_{\mathcal{E}}))$ can be identified with the $C^{\ast }$-algebra $%
\{T\in B\left( \mathcal{H}\right) ;P_{\lambda }T=TP_{\lambda }\ $for all $%
\lambda \in \Lambda \}$\ via\ the map $T\mapsto \widetilde{T}$, where $%
\widetilde{T}$ is extension of $T$ to $\mathcal{H}.$ If $\mathcal{E}=\{%
\mathcal{H}\}$, then $C^{\ast }(\mathcal{D}_{\mathcal{E}})=B(\mathcal{H})$.

For each $n\in \mathbb{N},$ $M_{n}(\mathcal{A})$ denotes the collection of
all matrices of order $n$ with elements in $\mathcal{A}$. Note that $M_{n}(%
\mathcal{A})$ is a locally $C^{\ast }$-algebra, the associated family of $%
C^{\ast }$-seminorms being denoted by $\{p_{\lambda }^{n}\}_{\lambda \in
\Lambda },$ and $M_{n}(C^{\ast }(\mathcal{D}_{\mathcal{E}}))$ can be
identified with $C^{\ast }(\mathcal{D}_{\mathcal{E}^{n}}).$

For each $n\in \mathbb{N}$, the $n$-amplification of a linear map $\varphi :%
\mathcal{A}\rightarrow C^{\ast }(\mathcal{D}_{\mathcal{E}})$ is the map $%
\varphi ^{\left( n\right) }:M_{n}(\mathcal{A})$ $\rightarrow $ $C^{\ast }(%
\mathcal{D}_{\mathcal{E}^{n}})$ defined by 
\begin{equation*}
\varphi ^{\left( n\right) }\left( \left[ a_{ij}\right] _{i,j=1}^{n}\right) =%
\left[ \varphi \left( a_{ij}\right) \right] _{i,j=1}^{n}
\end{equation*}%
for all $\left[ a_{ij}\right] _{i,j=1}^{n}\in M_{n}(\mathcal{A})$ .

A linear map $\varphi :\mathcal{A}\rightarrow C^{\ast }(\mathcal{D}_{%
\mathcal{E}})$ is called :

\begin{enumerate}
\item \textit{local contractive} if for each $\iota \in \Upsilon $, there
exists $\lambda \in \Lambda $ such that%
\begin{equation*}
\left\Vert \left. \varphi \left( a\right) \right\vert _{\mathcal{H}_{\iota
}}\right\Vert \leq p_{\lambda }\left( a\right) \text{ for all }a\in \mathcal{%
A};
\end{equation*}

\item \textit{local completely contractive (local }$\mathcal{CC}$\textit{) }%
if for each $\iota \in \Upsilon $, there exists $\lambda \in \Lambda $ such
that 
\begin{equation*}
\left\Vert \left. \varphi ^{\left( n\right) }\left( \left[ a_{ij}\right]
_{i,j=1}^{n}\right) \right\vert _{\mathcal{H}_{\iota }^{n}}\right\Vert \leq
p_{\lambda }^{n}\left( \left[ a_{ij}\right] _{i,j=1}^{n}\right) \text{ }
\end{equation*}%
for all $\left[ a_{ij}\right] _{i,j=1}^{n}\in M_{n}(\mathcal{A})\ $and for
all $n\in \mathbb{N};$

\item \textit{positive} if $\varphi \left( a\right) \ $is positive in $%
C^{\ast }(\mathcal{D}_{\mathcal{E}})\ $whenever $a\ $is positive in $%
\mathcal{A};$

\item \textit{completely positive }if $\varphi ^{\left( n\right) }\left( %
\left[ a_{ij}\right] _{i,j=1}^{n}\right) $ is positive in $C^{\ast }(%
\mathcal{D}_{\mathcal{E}^{n}})$ whenever $\left[ a_{ij}\right] _{i,j=1}^{n}$
is positive in $M_{n}(\mathcal{A})$ for all $n\in \mathbb{N};$

\item \textit{local positive} if for each $\iota \in \Upsilon $, there
exists $\lambda \in \Lambda $ such that $\left. \varphi \left( a\right)
\right\vert _{\mathcal{H}_{\iota }}\ $is positive in $B\left( \mathcal{H}%
_{\iota }\right) \ $whenever $a\geq _{\lambda }0$ and $\left. \varphi \left(
a\right) \right\vert _{\mathcal{H}_{\iota }}=0\ $whenever $a=_{\lambda }0;$

\item \textit{local completely positive (local }$\mathcal{CP}$\textit{) }if
for each $\iota \in \Upsilon $, there exists $\lambda \in \Lambda $ such
that $\left. \varphi ^{\left( n\right) }\left( \left[ a_{ij}\right]
_{i,j=1}^{n}\right) \right\vert _{\mathcal{H}_{\iota }^{n}}\ $is positive in 
$B\left( \mathcal{H}_{\iota }^{n}\right) $ whenever $\left[ a_{ij}\right]
_{i,j=1}^{n}\geq _{\lambda }0$ and $\left. \varphi ^{\left( n\right) }\left( %
\left[ a_{ij}\right] _{i,j=1}^{n}\right) \right\vert _{\mathcal{H}_{\iota
}^{n}}=0\ \ $whenever $\left[ a_{ij}\right] _{i,j=1}^{n}=_{\lambda }0$,$\ $%
for all $n\in \mathbb{N}.$
\end{enumerate}

\section{Arveson's extension theorem}

\textit{\ }

Let $\{\mathcal{H},\mathcal{E=\{H}_{n}\mathcal{\}}_{n},\mathcal{D}_{\mathcal{%
E}}\}$ be a quantized Fr\'{e}chet domain in $\mathcal{H},P_{n}$ be the
projection from $\mathcal{H}$ onto $\mathcal{H}_{n},n\geq 1$ and $%
S_{n}=\left( \text{id}_{\mathcal{H}}-P_{n-1}\right) P_{n}$ be the projection
onto $\mathcal{H}_{n-1}^{\perp }\cap \mathcal{H}_{n},$ $n\geq 2$ and $%
S_{1}=P_{1}$. By \cite[Proposition 4.2]{D}, if $T\in C^{\ast }(\mathcal{D}_{%
\mathcal{E}}),$ then $T$ $\ $has a diagonal representation, $%
T=\tsum\limits_{n=1}^{\infty }S_{n}TS_{n}.$

Let $\mathcal{A}$ be a unital locally $C^{\ast }$-algebra with the topology
defined by the family of $C^{\ast }$-seminorms $\{p_{\lambda }\}_{\lambda
\in \Lambda }$. A self-adjoint subspace $\mathcal{S}$ containing $1_{%
\mathcal{A}}$ is called a\textit{\ local operator system. }

\begin{remark}
\label{help} Let $\mathcal{A}$ be a unital locally $C^{\ast }$-algebra, $%
\mathcal{S\subseteq }$ $\mathcal{A}$ be a local operator system and $\varphi
:\mathcal{S\rightarrow }B\mathcal{(H)}$ be a local $\mathcal{CP}$-map. Then
there exist $\lambda \in \Lambda $ and a $\mathcal{CP}$-map $\varphi
_{\lambda }:\mathcal{S}_{\lambda }\rightarrow B\mathcal{(H)}$ such that $%
\varphi =\varphi _{\lambda }\circ \pi _{\lambda }^{\mathcal{A}}$, where $%
\mathcal{S}_{\lambda }=\pi _{\lambda }^{\mathcal{A}}\left( \mathcal{S}%
\right) .$

Indeed, since $\varphi :\mathcal{S\rightarrow }B\mathcal{(H)}$ is a local $%
\mathcal{CP}$-map there exists $\lambda \in \Lambda $ such that $\varphi
\left( a\right) $ is positive in $B\mathcal{(H)}$ whenever $\pi _{\lambda }^{%
\mathcal{A}}\left( a\right) $ is positive in $\mathcal{S}_{\lambda }$ and $%
\varphi \left( a\right) =0$ if $\pi _{\lambda }^{\mathcal{A}}\left( a\right)
=0$. Therefore, there exists a linear map $\varphi _{\lambda }^{+}:\left( 
\mathcal{S}_{\lambda }\right) _{+}\rightarrow B\mathcal{(H)}$ such that $%
\varphi _{\lambda }^{+}\left( \pi _{\lambda }^{\mathcal{A}}\left( a\right)
\right) =\varphi \left( a\right) .$

If $a\in \mathcal{S}$ is a $\lambda $-self-adjoint element, then $p_{\lambda
}\left( a\right) 1_{\mathcal{A}}\pm a\geq _{\lambda }0$. Clearly, $%
p_{\lambda }\left( a\right) 1_{\mathcal{A}}\pm a\in \mathcal{S}$ and $a=%
\frac{1}{2}\left( p_{\lambda }\left( a\right) 1_{\mathcal{A}}+a\right) -%
\frac{1}{2}\left( p_{\lambda }\left( a\right) 1_{\mathcal{A}}-a\right) $.
Therefore, any $\lambda $-self-adjoint element from $\mathcal{S}$ is the
difference of two $\lambda $-positive elements from $\mathcal{S}$. On the
other hand, since $\mathcal{S}$ is a self-adjoint subspace of $\mathcal{A},$
any element from $\mathcal{S}$ is the sum of two $\lambda $-self-adjoint
elements from $\mathcal{S}.$ Therefore, the map $\varphi _{\lambda }^{+}$
extends by linearity to a positive linear map $\varphi _{\lambda }:\mathcal{S%
}_{\lambda }\rightarrow B\mathcal{(H)}$ such that $\varphi _{\lambda }\left(
\pi _{\lambda }^{\mathcal{A}}\left( a\right) \right) =\varphi \left(
a\right) $. Clearly, $\varphi _{\lambda }$ is completely positive.
\end{remark}

The next theorem is a local convex version of the well-known Arveson's
extension theorem \cite[Theorem 1.2.3]{A}.

\begin{theorem}
\label{A} Let $\mathcal{A}$ be a unital locally $C^{\ast }$-algebra, $%
\mathcal{S\subseteq }$ $\mathcal{A}$ be a local operator system, $\{\mathcal{%
H},\mathcal{E=\{H}_{n}\mathcal{\}}_{n},\mathcal{D}_{\mathcal{E}}\}\ $be a
quantized Fr\'{e}chet domain in a Hilbert space $\mathcal{H}$ and $\varphi :%
\mathcal{S\rightarrow }C^{\ast }\left( \mathcal{D}_{\mathcal{E}}\right) $ be
a local $\mathcal{CP}$-map. Then there exists a local $\mathcal{CP}$-map $%
\widetilde{\varphi }:$ $\mathcal{A\rightarrow }C^{\ast }\left( \mathcal{D}_{%
\mathcal{E}}\right) $ extending $\varphi .$
\end{theorem}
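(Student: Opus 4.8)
The plan is to reduce the local (locally $C^\ast$-algebra) statement to the classical Arveson extension theorem applied at a single level. By Remark~\ref{help}, since $\varphi:\mathcal{S}\rightarrow C^\ast(\mathcal{D}_\mathcal{E})$ is local $\mathcal{CP}$ and $\mathcal{E}=\{\mathcal{H}_n\}_n$ is countable, for each $n$ we get some $\lambda=\lambda(n)\in\Lambda$ and a genuine $\mathcal{CP}$-map factoring the compression of $\varphi$ through $\mathcal{S}_\lambda$. The first issue is that $a\mapsto p_\lambda(a)$ and the index $\lambda$ depend on $n$; I would exploit the diagonal representation $T=\sum_{n=1}^\infty S_nTS_n$ valid for $T\in C^\ast(\mathcal{D}_\mathcal{E})$ (cited from \cite[Proposition 4.2]{D}) to treat each block separately. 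Concretely, compose $\varphi$ with the projection onto the block $\mathcal{H}_{n-1}^\perp\cap\mathcal{H}_n$, i.e.\ consider $\varphi_n:=S_n\varphi(\cdot)S_n$ as a map into $B(\mathcal{H}_{n-1}^\perp\cap\mathcal{H}_n)$, a finite-dimensional-in-the-filtration piece, though really into $B(\mathcal{H})$.

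Next, for each $n$ choose $\lambda_n\in\Lambda$ with $\mathcal{S}_{\lambda_n}$ and a $\mathcal{CP}$-map $\psi_n:\mathcal{S}_{\lambda_n}\rightarrow B(\mathcal{H})$ (with range inside the relevant block) such that $\varphi_n=\psi_n\circ\pi_{\lambda_n}^\mathcal{A}$. Since $\mathcal{A}_{\lambda_n}$ is a genuine $C^\ast$-algebra and $\mathcal{S}_{\lambda_n}\subseteq\mathcal{A}_{\lambda_n}$ is an operator system (containing $1$, self-adjoint), the classical Arveson extension theorem \cite[Theorem 1.2.3]{A} yields a $\mathcal{CP}$-map $\widetilde{\psi_n}:\mathcal{A}_{\lambda_n}\rightarrow B(\mathcal{H})$ extending $\psi_n$, with range again inside the $n$-th block. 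Pulling back, $\widetilde{\varphi_n}:=\widetilde{\psi_n}\circ\pi_{\lambda_n}^\mathcal{A}:\mathcal{A}\rightarrow B(\mathcal{H})$ is a local $\mathcal{CP}$-map extending $\varphi_n$. Then I would define $\widetilde{\varphi}(a):=\sum_{n=1}^\infty\widetilde{\varphi_n}(a)$ and verify that this lands in $C^\ast(\mathcal{D}_\mathcal{E})$: for $a\in\mathcal{A}$, the partial sums are supported in $\bigoplus_{k\le n}(\mathcal{H}_{k-1}^\perp\cap\mathcal{H}_k)$, the compression $\widetilde{\varphi}(a)|_{\mathcal{H}_m}=\sum_{n\le m}\widetilde{\varphi_n}(a)$ is a finite sum, and one checks $\widetilde{\varphi}(a)P_\iota=P_\iota\widetilde{\varphi}(a)P_\iota\in B(\mathcal{H})$ and $P_\iota\widetilde{\varphi}(a)\subseteq\widetilde{\varphi}(a)P_\iota$ using that each $\widetilde{\varphi_n}$ respects the filtration and that $S_n$ commutes with all $P_\iota$. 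Finally, checking local complete positivity of $\widetilde{\varphi}$: for each $\iota=m$, one picks a single $\lambda$ dominating $\lambda_1,\dots,\lambda_m$ (upward filtration of $\{p_\lambda\}$), so that $\widetilde{\varphi}^{(k)}([a_{ij}])|_{\mathcal{H}_m^k}$ is positive whenever $[a_{ij}]\ge_\lambda 0$, because each of the finitely many summands is.

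The main obstacle I expect is the bookkeeping needed to show $\widetilde{\varphi}$ actually maps into $C^\ast(\mathcal{D}_\mathcal{E})$ rather than just into unbounded operators on $\mathcal{D}_\mathcal{E}$: one must confirm that $\widetilde{\varphi}(a)$ restricted to each $\mathcal{H}_\iota$ is bounded and that both invariance conditions ($T(\mathcal{H}_\iota)\subseteq\mathcal{H}_\iota$ and $T(\mathcal{H}_\iota^\perp\cap\mathcal{D}_\mathcal{E})\subseteq\mathcal{H}_\iota^\perp\cap\mathcal{D}_\mathcal{E}$) hold, which is where the diagonal/block decomposition of the target and the compatibility of the $\widetilde{\psi_n}$ with the nested projections are essential. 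A secondary subtlety is ensuring the block-wise Arveson extensions can be chosen to have range \emph{exactly} in $B(\mathcal{H}_{n-1}^\perp\cap\mathcal{H}_n)$; this follows by composing Arveson's extension with the compression onto that block, since compression by a projection is $\mathcal{CP}$ and $\varphi_n$ already had range there, so $S_n\widetilde{\psi_n}(\cdot)S_n$ still extends $\psi_n$.
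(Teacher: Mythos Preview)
Your proposal is correct and follows essentially the same strategy as the paper: factor each compressed block through a $C^\ast$-quotient via Remark~\ref{help}, apply classical Arveson there, and reassemble using the diagonal decomposition $T=\sum_n S_nTS_n$. The only cosmetic difference is that the paper compresses $\varphi$ with $P_n$ (to $B(\mathcal{H}_n)$) before extending and then sandwiches the extension with $S_n$, whereas you compress with $S_n$ from the outset; after the final $S_n(\cdot)S_n$ both constructions coincide, and the verifications of membership in $C^\ast(\mathcal{D}_\mathcal{E})$, of local complete positivity (choosing $\lambda$ dominating $\lambda_1,\dots,\lambda_m$), and of the extension property $\widetilde{\varphi}|_\mathcal{S}=\varphi$ are identical.
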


\begin{proof}
We divided the proof into two steps.

First, we suppose that $\mathcal{E=H}$. Then $C^{\ast }\left( \mathcal{D}_{%
\mathcal{E}}\right) =B\mathcal{(H)\ }$and since $\varphi :$ $\mathcal{%
S\rightarrow }B\mathcal{(H)}$ is a local $\mathcal{CP}$-map, by Remark \ref%
{help}, there exist $\lambda \in \Lambda \ $and a $\mathcal{CP}$-map $%
\varphi _{\lambda }:$ $\mathcal{S}_{\lambda }\mathcal{\rightarrow }B\mathcal{%
(H)}$ such that $\varphi \left( a\right) =\varphi _{\lambda }\left( \pi
_{\lambda }^{\mathcal{A}}\left( a\right) \right) $ for all $a\in \mathcal{A}$%
. Since $\mathcal{S}_{\lambda }\subseteq \mathcal{A}_{\mathcal{\lambda }}$
is an operator system and $\varphi _{\lambda }:$ $\mathcal{S}_{\lambda }%
\mathcal{\rightarrow }B\mathcal{(H)}$ is a $\mathcal{CP}$-map, from
Arveson's extension theorem \cite[Theorem 1.2.3]{A}, it follows that there
exists a $\mathcal{CP}$-map $\widetilde{\varphi _{\lambda }}:\mathcal{A}%
_{\lambda }\mathcal{\rightarrow }B\mathcal{(H)}$ extending $\varphi
_{\lambda }$. Let $\widetilde{\varphi }=\widetilde{\varphi _{\lambda }}\circ
\pi _{\lambda }^{\mathcal{A}}$. Clearly, $\widetilde{\varphi }$ is a local $%
\mathcal{CP}$-map from $\mathcal{A}$ to $B\mathcal{(H)}$ and $\left. 
\widetilde{\varphi }\right\vert _{\mathcal{S}}=\varphi .$

Now we prove the general case.

For each $n$, consider the map $\varphi _{n}:$ $\mathcal{S\rightarrow }B%
\mathcal{(H)}$ defined by $\varphi _{n}\left( a\right) =P_{n}\varphi \left(
a\right) P_{n}$. Since, for each $k,$ $\varphi _{n}^{\left( k\right) }\left( %
\left[ a_{ij}\right] _{i,j=1}^{k}\right) =P_{n}^{\oplus k}\varphi ^{\left(
k\right) }\left( \left[ a_{ij}\right] _{i,j=1}^{k}\right) P_{n}^{\oplus k}$,
and $\varphi ^{\left( k\right) }$ is local positive, it follows that $%
\varphi _{n}^{\left( k\right) }$ is local positive. Therefore, $\varphi _{n}$
is a local $\mathcal{CP}$-map, and by the first part of the proof, there
exists a local $\mathcal{CP}$-map $\widetilde{\varphi _{n}}:\mathcal{%
A\rightarrow }$ $B\mathcal{(H)}$ extending $\varphi _{n}$. Consider the map $%
\widetilde{\varphi }:\mathcal{A\rightarrow }C^{\ast }\left( \mathcal{D}_{%
\mathcal{E}}\right) $ defined by 
\begin{equation*}
\widetilde{\varphi }\left( a\right) =\tsum\limits_{n=1}^{\infty }S_{n}%
\widetilde{\varphi _{n}}\left( a\right) S_{n}.
\end{equation*}%
For each $n,$%
\begin{equation*}
\widetilde{\varphi }\left( a\right) P_{n}=\tsum\limits_{k=1}^{\infty }S_{k}%
\widetilde{\varphi _{k}}\left( a\right)
S_{k}P_{n}=\tsum\limits_{k=1}^{n}S_{k}\widetilde{\varphi _{k}}\left(
a\right) S_{k}
\end{equation*}%
whence, it follows that $\widetilde{\varphi }\left( a\right) \left( \mathcal{%
H}_{n}\right) \subseteq $ $\mathcal{H}_{n}$ and $\left. \widetilde{\varphi }%
\left( a\right) \right\vert _{\mathcal{H}_{n}}\in B\left( \mathcal{H}%
_{n}\right) $. In the same way, we deduce that $\widetilde{\varphi }\left(
a^{\ast }\right) \left( \mathcal{H}_{n}\right) \subseteq $ $\mathcal{H}_{n},$
$\left. \widetilde{\varphi }\left( a^{\ast }\right) \right\vert _{\mathcal{H}%
_{n}}\in B\left( \mathcal{H}_{n}\right) $ and $\left. \widetilde{\varphi }%
\left( a\right) \right\vert _{\mathcal{H}_{n}}^{\ast }=\left. \widetilde{%
\varphi }\left( a^{\ast }\right) \right\vert _{\mathcal{H}_{n}}$. Therefore, 
$\widetilde{\varphi }$ is well defined.

Since, for each $k\in \{1,2,...,n\},$ $\widetilde{\varphi _{k}}$ $\ $is a
local $\mathcal{CP}$-map, there exists $\lambda _{n}\in \Lambda $ such that $%
\ \left. \widetilde{\varphi _{k}}^{\left( m\right) }\left( \left[ a_{ij}%
\right] _{i,j=1}^{m}\right) \right\vert _{\mathcal{H}_{n}^{m}}$ is positive
in $B\left( \mathcal{H}_{n}^{m}\right) \ $whenever $\left[ a_{ij}\right]
_{i,j=1}^{m}$ $\geq _{\lambda _{n}}0$, and $\left. \widetilde{\varphi _{k}}%
^{\left( m\right) }\left( \left[ a_{ij}\right] _{i,j=1}^{m}\right)
\right\vert _{\mathcal{H}_{n}^{m}}=0$ whenever $\left[ a_{ij}\right]
_{i,j=1}^{m}$ $=_{\lambda _{n}}0,$ for all $k\in \{1,2,...,n\}$ and for all $%
m.$ Therefore, there exists $\lambda _{n}\in \Lambda $ such that 
\begin{equation*}
\left. \widetilde{\varphi }^{\left( m\right) }\left( \left[ a_{ij}\right]
_{i,j=1}^{m}\right) \right\vert _{\mathcal{H}_{n}^{m}}=\tsum%
\limits_{k=1}^{n}\left. S_{k}^{\oplus m}\widetilde{\varphi _{k}}^{\left(
m\right) }\left( \left[ a_{ij}\right] _{i,j=1}^{m}\right) S_{k}^{\oplus
m}\right\vert _{\mathcal{H}_{n}^{m}}
\end{equation*}%
is positive in $B\left( \mathcal{H}_{n}^{m}\right) $ whenever $\left[ a_{ij}%
\right] _{i,j=1}^{m}$ $\geq _{\lambda _{n}}0$, and$\left. \widetilde{\varphi 
}^{\left( m\right) }\left( \left[ a_{ij}\right] _{i,j=1}^{m}\right)
\right\vert _{\mathcal{H}_{n}^{m}}=0$ if $\left[ a_{ij}\right] _{i,j=1}^{m}$ 
$=_{\lambda _{n}}0$. Thus, we showed that $\widetilde{\varphi }$ is a local $%
\mathcal{CP}$-map.

It remains to show that $\widetilde{\varphi }$ extends $\varphi $. Let $a\in 
\mathcal{S}$. From 
\begin{eqnarray*}
\widetilde{\varphi }\left( a\right) P_{n} &=&\tsum\limits_{k=1}^{n}S_{k}%
\widetilde{\varphi _{k}}\left( a\right)
S_{k}=\tsum\limits_{k=1}^{n}S_{k}\varphi _{k}\left( a\right)
S_{k}=\tsum\limits_{k=1}^{n}S_{k}P_{k}\varphi \left( a\right) P_{k}S_{k} \\
&=&\tsum\limits_{k=1}^{n}S_{k}\varphi \left( a\right)
S_{k}=\tsum\limits_{k=1}^{n}\varphi \left( a\right) S_{k}=\varphi \left(
a\right) P_{n}
\end{eqnarray*}%
for all $n$, we deduce that $\widetilde{\varphi }$ extends $\varphi $, and
the theorem is proved.
\end{proof}

Arunkumar proved a particular case of Arveson's extension theorem \cite[%
Theorem 3.6]{Ar}.

\section{Wittstock's extension theorem}

Let $\mathcal{A}$ be a unital locally $C^{\ast }$-algebra with the topology
defined by the family of $C^{\ast }$-seminorms $\{p_{\lambda }\}_{\lambda
\in \Lambda }$. A subspace $\mathcal{M}$ of $\mathcal{A}$ is called \textit{%
a local operator space.}

Let $\mathcal{M\subseteq }$ $\mathcal{A}$ be a local operator space. Then%
\begin{equation*}
S_{\mathcal{M}}=\left\{ \left[ 
\begin{array}{cc}
\alpha 1_{\mathcal{A}} & a \\ 
b^{\ast } & \beta 1_{\mathcal{A}}%
\end{array}%
\right] ;\alpha ,\beta \in \mathbb{C},a,b\in \mathcal{M}\right\} \subseteq
M_{2}\left( \mathcal{A}\right)
\end{equation*}
is a local operator system.

It is clear that for each $n$, $M_{n}\left( M_{2}\left( \mathcal{A}\right)
\right) $ can be identified with $M_{2}\left( M_{n}\left( \mathcal{A}\right)
\right) .$

The following lemma is central to the next results.

\begin{lemma}
\label{1} Let $\mathcal{A}$ be a unital locally $C^{\ast }$-algebra, $%
\mathcal{M\subseteq }$ $\mathcal{A}$ be a local operator space and $\varphi :%
\mathcal{M\rightarrow }C^{\ast }\left( \mathcal{D}_{\mathcal{E}}\right) \ $%
be a local $\mathcal{CC}$-map. Then the map $\Phi :S_{\mathcal{M}%
}\rightarrow C^{\ast }\left( \mathcal{D}_{\mathcal{E\oplus E}}\right) ,$%
\begin{equation*}
\Phi \left( \left[ 
\begin{array}{cc}
\alpha 1_{\mathcal{A}} & a \\ 
b^{\ast } & \beta 1_{\mathcal{A}}%
\end{array}%
\right] \right) =\left[ 
\begin{array}{cc}
\alpha 1_{C^{\ast }\left( \mathcal{D}_{\mathcal{E}}\right) } & \varphi
\left( a\right) \\ 
\varphi \left( b\right) ^{\ast } & \beta 1_{C^{\ast }\left( \mathcal{D}_{%
\mathcal{E}}\right) }%
\end{array}%
\right]
\end{equation*}%
is a local $\mathcal{CP}$-map.
\end{lemma}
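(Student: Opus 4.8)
The plan is to reduce the statement to the classical $C^\ast$-algebra fact that a completely contractive map $\psi:\mathcal M_0\to B(\mathcal H_0)$ on an operator subspace induces a completely positive map on the associated $2\times 2$ operator system, and then to transfer that fact through the projective-limit machinery, exactly as in the proof of Theorem~\ref{A}. First I would handle the case $\mathcal E=\mathcal H$, so that $C^\ast(\mathcal D_{\mathcal E})=B(\mathcal H)$ and $C^\ast(\mathcal D_{\mathcal E\oplus\mathcal E})=B(\mathcal H\oplus\mathcal H)=M_2(B(\mathcal H))$. Since $\varphi:\mathcal M\to B(\mathcal H)$ is local $\mathcal{CC}$, there is $\lambda\in\Lambda$ with $\|\varphi^{(n)}([a_{ij}])\|\le p_\lambda^n([a_{ij}])$ for all $n$; in particular $\varphi$ kills $\mathcal I_\lambda\cap\mathcal M$ and factors as $\varphi=\varphi_\lambda\circ\pi_\lambda^{\mathcal A}$ for a genuine completely contractive map $\varphi_\lambda:\mathcal M_\lambda:=\pi_\lambda^{\mathcal A}(\mathcal M)\to B(\mathcal H)$, where $\mathcal M_\lambda\subseteq\mathcal A_\lambda$ is an operator space. (I would spell this factorization out briefly, just as Remark~\ref{help} does for $\mathcal{CP}$-maps: the seminorm estimate forces well-definedness on the quotient.)

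Next I would invoke the standard result (Paulsen; see the proof of Wittstock's theorem in \cite{A}) that the map
\begin{equation*}
\Phi_\lambda\!\left(\begin{bmatrix}\alpha 1 & x\\ y^\ast & \beta 1\end{bmatrix}\right)=\begin{bmatrix}\alpha 1 & \varphi_\lambda(x)\\ \varphi_\lambda(y)^\ast & \beta 1\end{bmatrix}
\end{equation*}
is completely positive on the operator system $S_{\mathcal M_\lambda}\subseteq M_2(\mathcal A_\lambda)$ precisely because $\varphi_\lambda$ is completely contractive; the positivity of a matrix $\bigl[\begin{smallmatrix}\alpha 1 & x\\ x^\ast & \beta 1\end{smallmatrix}\bigr]$ with $\alpha,\beta>0$ is equivalent to $\|\alpha^{-1/2}x\beta^{-1/2}\|\le 1$, and applying this at each amplification level gives complete positivity. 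Then $\Phi=\Phi_\lambda\circ(\pi_\lambda^{\mathcal A})^{(2)}$, and since $(\pi_\lambda^{\mathcal A})^{(2)}$ is the quotient map $M_2(\mathcal A)\to M_2(\mathcal A_\lambda)$ (a $\ast$-morphism), one checks directly that $\Phi$ satisfies the defining conditions of a local $\mathcal{CP}$-map: for the fixed $\lambda$, if the matrix over $S_{\mathcal M}$ is $\lambda$-positive then its image under $(\pi_\lambda^{\mathcal A})^{(2)}$ lands in the positive cone of $M_n(S_{\mathcal M_\lambda})$ and hence maps to a positive operator; if it is $=_\lambda 0$ it maps to $0$.

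For the general case I would copy the diagonal-representation trick from Theorem~\ref{A}. Writing $P_n$ for the projection onto $\mathcal H_n$ and $S_n=(\mathrm{id}-P_{n-1})P_n$, set $\varphi_n(a)=P_n\varphi(a)P_n:\mathcal M\to B(\mathcal H)$; since amplifications of $\varphi_n$ are compressions of amplifications of $\varphi$, each $\varphi_n$ is local $\mathcal{CC}$, so by the first step each induces a local $\mathcal{CP}$-map $\Phi_n:S_{\mathcal M}\to M_2(B(\mathcal H))=B(\mathcal H\oplus\mathcal H)$. Then define $\Phi$ on $S_{\mathcal M}$ by $\Phi(X)=\sum_{n=1}^\infty (S_n\oplus S_n)\,\Phi_n(X)\,(S_n\oplus S_n)$ and verify, exactly as in Theorem~\ref{A}, that (i) $\Phi(X)(\mathcal H_n\oplus\mathcal H_n)\subseteq\mathcal H_n\oplus\mathcal H_n$ with bounded restriction, so $\Phi$ is a well-defined map into $C^\ast(\mathcal D_{\mathcal E\oplus\mathcal E})$, (ii) for each $n$ there is a single $\lambda_n$ controlling $\varphi_1,\dots,\varphi_n$ simultaneously (take the supremum in the filtered index set), so the compression of $\Phi^{(m)}$ to $\mathcal H_n^m\oplus\mathcal H_n^m$ is a finite sum of positive operators when the argument is $\lambda_n$-positive and vanishes when it is $=_{\lambda_n}0$, giving local complete positivity of $\Phi$, and (iii) $\Phi$ restricted to $S_{\mathcal M}$ has the claimed form because $S_n P_n = S_n$ and $\sum_{k\le n}S_k=P_n$ collapse the telescoping sum, exactly the computation $\widetilde\varphi(a)P_n=\varphi(a)P_n$ at the end of Theorem~\ref{A}, applied entrywise to the $2\times 2$ matrix. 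The main obstacle I anticipate is bookkeeping rather than conceptual: one must be careful that the constant $1_{C^\ast(\mathcal D_{\mathcal E})}$ in the corners of $\Phi(X)$ is reproduced correctly by the $\sum S_n(\cdot)S_n$ formula (it is, since $\Phi_n$ sends $\bigl[\begin{smallmatrix}\alpha 1 & 0\\ 0 & \beta 1\end{smallmatrix}\bigr]$ to $\bigl[\begin{smallmatrix}\alpha P_n & 0\\ 0 & \beta P_n\end{smallmatrix}\bigr]$ and $\sum_n S_n P_n S_n$ telescopes to the identity on $\mathcal D_{\mathcal E}$), and that the single index $\lambda$ from the $\mathcal{CC}$ estimate is genuinely uniform in $n$ within each $\varphi_n$-step while the final $\lambda_n$ depends on $n$ only — precisely the same dependence structure already accepted in Theorem~\ref{A}, so it is harmless.
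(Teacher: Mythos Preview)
Your approach is correct but takes a more circuitous route than the paper and introduces an unnecessary restriction. The paper proves the lemma directly for an \emph{arbitrary} quantized domain $\mathcal{E}=\{\mathcal{H}_\iota\}_{\iota\in\Upsilon}$: fixing $\iota$, it takes the $\lambda$ supplied by the local $\mathcal{CC}$ hypothesis and then verifies by hand that $\Phi^{(n)}(X)|_{\mathcal{H}_\iota^n\oplus\mathcal{H}_\iota^n}\ge 0$ whenever $X\ge_\lambda 0$, carrying out the standard $\varepsilon$-regularization computation from \cite[Lemma~8.1]{P} inside the quotient $\mathcal{A}_\lambda$ (and separately checking the $=_\lambda 0$ case). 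In other words, rather than citing Paulsen's lemma as a black box and then reassembling via the diagonal $S_n$-decomposition, the paper simply reproduces the Paulsen argument at each level $\iota$.

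Your first step already contains exactly this idea: the factorization $\varphi=\varphi_\lambda\circ\pi_\lambda^{\mathcal A}$ and the complete positivity of $\Phi_\lambda$ are precisely what is needed, applied not to a trivial domain $\mathcal E=\mathcal H$ but to the single level $\mathcal H_\iota$. Your second step (the $S_n$-assembly) is therefore unnecessary here --- unlike in Theorem~\ref{A}, you are not \emph{extending} a map and then having to glue together independently-chosen extensions; you are merely \emph{verifying} that an already globally-defined $\Phi$ is local CP, which is intrinsically a level-by-level check. The detour also costs you generality: the $S_n$-decomposition requires a Fr\'echet (countably indexed) domain, whereas the lemma as stated and as proved in the paper holds for any directed index set $\Upsilon$. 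So your proof works, but the paper's is both shorter and strictly more general.
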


\begin{proof}
Since $\varphi $ is a local $\mathcal{CC}$-map, for each $\iota \in \Upsilon
,$ there exists $\lambda \in \Lambda $ such that $\left\Vert \left. \varphi
^{\left( n\right) }\left( A\right) \right\vert _{\mathcal{H}_{\tau
}^{n}}\right\Vert \leq p_{\lambda }^{n}\left( A\right) $ for all $A\in
M_{n}\left( \mathcal{M}\right) $ and for all $n.$ Since $M_{n}\left( S_{%
\mathcal{M}}\right) $ is a subspace of $M_{n}\left( M_{2}\left( \mathcal{A}%
\right) \right) ,$ it can be identified with a subspace in $M_{2}\left(
M_{n}\left( \mathcal{A}\right) \right) $.$\ $Thus, an element $X\ $in $%
M_{n}\left( S_{\mathcal{M}}\right) $ is of the forma $\left[ 
\begin{array}{cc}
\left[ \alpha _{ij}1_{\mathcal{A}}\right] _{i,j=1}^{n} & \left[ a_{ij}\right]
_{i,j=1}^{n} \\ 
\left( \left[ b_{ij}\right] _{i,j=1}^{n}\right) ^{\ast } & \left[ \beta
_{ij}1_{\mathcal{A}}\right] _{i,j=1}^{n}%
\end{array}%
\right] .$ To show that $\Phi $ is a local $\mathcal{CP}$-map, we must show
that for each $\iota \in \Upsilon ,$ there exists $\lambda \in \Lambda $
such that $\left. \Phi ^{\left( n\right) }\left( X\right) \right\vert _{%
\mathcal{H}_{\iota }^{n}\oplus \mathcal{H}_{\iota }^{n}}\geq 0\ $whenever $%
X\geq _{\lambda }0$, and $\left. \Phi ^{\left( n\right) }\left( X\right)
\right\vert _{\mathcal{H}_{\iota }^{n}\oplus \mathcal{H}_{\iota }^{n}}=0$
whenever $X=_{\lambda }0,$ for all $n.$

Let$\ \iota \in \Upsilon $ and $n\in \mathbb{N}^{\ast }$. If $X=\left[ 
\begin{array}{cc}
\left[ \alpha _{ij}1_{\mathcal{A}}\right] _{i,j=1}^{n} & \left[ a_{ij}\right]
_{i,j=1}^{n} \\ 
\left( \left[ b_{ij}\right] _{i,j=1}^{n}\right) ^{\ast } & \left[ \beta
_{ij}1_{\mathcal{A}}\right] _{i,j=1}^{n}%
\end{array}%
\right] \geq _{\lambda }0$, then%
\begin{equation*}
\left[ 
\begin{array}{cc}
\left[ \alpha _{ij}1_{\mathcal{A}_{\lambda }}\right] _{i,j=1}^{n} & \left[
\pi _{\lambda }^{\mathcal{A}}\left( a_{ij}\right) \right] _{i,j=1}^{n} \\ 
\left( \left[ \pi _{\lambda }^{\mathcal{A}}\left( b_{ij}\right) \right]
_{i,j=1}^{n}\right) ^{\ast } & \left[ \beta _{ij}1_{\mathcal{A}_{\lambda }}%
\right] _{i,j=1}^{n}%
\end{array}%
\right] \geq 0
\end{equation*}%
in $M_{2}\left( M_{n}\left( \mathcal{A}_{\lambda }\right) \right) $.
Consequently, $\left[ \pi _{\lambda }^{\mathcal{A}}\left( a_{ij}\right) %
\right] _{i,j=1}^{n}=\left[ \pi _{\lambda }^{\mathcal{A}}\left(
b_{ij}\right) \right] _{i,j=1}^{n}=A$ and the matrices $\left[ \alpha _{ij}%
\right] _{i,j=1}^{n}$ and $\left[ \beta _{ij}\right] _{i,j=1}^{n}$ are
positive. As in the proof of \cite[Lemma 8.1]{P}, for all $\varepsilon >0,$
the matrices $\alpha _{\varepsilon }=\left[ \alpha _{ij}1_{\mathcal{A}%
_{\lambda }}\right] _{i,j=1}^{n}+\varepsilon 1_{M_{n}\left( \mathcal{A}%
_{\lambda }\right) }$ and $\beta _{\varepsilon }=\left[ \beta _{ij}1_{%
\mathcal{A}_{\lambda }}\right] _{i,j=1}^{n}+\varepsilon 1_{M_{n}\left( 
\mathcal{A}_{\lambda }\right) }\ $are positive and invertible, and $%
\left\Vert \alpha _{\varepsilon }^{-\frac{1}{2}}A\beta _{\varepsilon }^{-%
\frac{1}{2}}\right\Vert _{M_{n}\left( \mathcal{A}_{\lambda }\right) }\leq 1$.

Let $\widetilde{\alpha _{\varepsilon }}=\left[ \alpha _{ij}\text{id}_{%
\mathcal{H}_{\iota }}\right] _{i,j=1}^{n}+\varepsilon $id$_{\mathcal{H}%
_{\iota }^{n}},$ $\widetilde{\beta _{\varepsilon }}=\left[ \beta _{ij}\text{%
id}_{\mathcal{H}_{\iota }}\right] _{i,j=1}^{n}+\varepsilon $id$_{\mathcal{H}%
_{\iota }^{n}}\ $and 
\begin{equation*}
H_{\varepsilon }=\left[ 
\begin{array}{cc}
\text{id}_{\mathcal{H}_{\iota }^{n}} & \widetilde{\alpha _{\varepsilon }}^{%
\frac{-1}{2}}\left. \varphi ^{\left( n\right) }\left( \left[ a_{ij}\right]
_{i,j=1}^{n}\right) \right\vert _{\mathcal{H}_{\iota }^{n}}\widetilde{\beta
_{\varepsilon }}^{\frac{-1}{2}} \\ 
\widetilde{\beta _{\varepsilon }}^{\frac{-1}{2}}\left. \left( \varphi
^{\left( n\right) }\left( \left[ a_{ij}\right] _{i,j=1}^{n}\right) \right)
^{\ast }\right\vert _{\mathcal{H}_{\iota }^{n}}\widetilde{\alpha
_{\varepsilon }}^{\frac{-1}{2}} & \text{id}_{\mathcal{H}_{\iota }^{n}}%
\end{array}%
\right] .
\end{equation*}

Since 
\begin{eqnarray*}
&&\widetilde{\alpha _{\varepsilon }}^{-\frac{1}{2}}\left. \varphi ^{\left(
n\right) }\left( \left[ a_{ij}\right] _{i,j=1}^{n}\right) \right\vert _{%
\mathcal{H}_{\iota }^{n}}\widetilde{\beta _{\varepsilon }}^{-\frac{1}{2}} \\
&=&\left. \varphi ^{\left( n\right) }\left( \left( \left[ \alpha _{ij}1_{%
\mathcal{A}}\right] _{i,j=1}^{n}+\varepsilon 1_{M_{n}\left( \mathcal{A}%
\right) }\right) ^{-\frac{1}{2}}\left[ a_{ij}\right] _{i,j=1}^{n}\left( %
\left[ \beta _{ij}1_{\mathcal{A}}\right] _{i,j=1}^{n}+\varepsilon
1_{M_{n}\left( \mathcal{A}\right) }\right) ^{-\frac{1}{2}}\right)
\right\vert _{\mathcal{H}_{\iota }^{n}}
\end{eqnarray*}%
and%
\begin{equation*}
\left\Vert \left. \varphi ^{\left( n\right) }\left( \left( \left[ \alpha
_{ij}1_{\mathcal{A}}\right] _{i,j=1}^{n}+\varepsilon 1_{M_{n}\left( \mathcal{%
A}\right) }\right) ^{-\frac{1}{2}}\left[ a_{ij}\right] _{i,j=1}^{n}\left( %
\left[ \beta _{ij}1_{\mathcal{A}}\right] _{i,j=1}^{n}+\varepsilon
1_{M_{n}\left( \mathcal{A}\right) }\right) ^{-\frac{1}{2}}\right)
\right\vert _{\mathcal{H}_{\iota }^{n}}\right\Vert
\end{equation*}%
\begin{eqnarray*}
&\leq &p_{\lambda }^{n}\left( \left( \left[ \alpha _{ij}1_{\mathcal{A}}%
\right] _{i,j=1}^{n}+\varepsilon 1_{M_{n}\left( \mathcal{A}\right) }\right)
^{-\frac{1}{2}}\left[ a_{ij}\right] _{i,j=1}^{n}\left( \left[ \beta _{ij}1_{%
\mathcal{A}}\right] _{i,j=1}^{n}+\varepsilon 1_{M_{n}\left( \mathcal{A}%
\right) }\right) ^{-\frac{1}{2}}\right) \\
&=&\left\Vert \alpha _{\varepsilon }^{-\frac{1}{2}}A\beta _{\varepsilon }^{-%
\frac{1}{2}}\right\Vert _{M_{n}\left( \mathcal{A}_{\lambda }\right) }\leq 1
\end{eqnarray*}%
it follows that%
\begin{equation*}
H_{\varepsilon }=\left[ 
\begin{array}{cc}
\text{id}_{\mathcal{H}_{\iota }^{n}} & \widetilde{\alpha _{\varepsilon }}^{%
\frac{-1}{2}}\left. \varphi ^{\left( n\right) }\left( \left[ a_{ij}\right]
_{i,j=1}^{n}\right) \right\vert _{\mathcal{H}_{\iota }^{n}}\widetilde{\beta
_{\varepsilon }}^{\frac{-1}{2}} \\ 
\widetilde{\beta _{\varepsilon }}^{\frac{-1}{2}}\left. \left( \varphi
^{\left( n\right) }\left( \left[ a_{ij}\right] _{i,j=1}^{n}\right) \right)
^{\ast }\right\vert _{\mathcal{H}_{\iota }^{n}}\widetilde{\alpha
_{\varepsilon }}^{\frac{-1}{2}} & \text{id}_{\mathcal{H}_{\iota }^{n}}%
\end{array}%
\right] \geq 0
\end{equation*}%
in $B\left( \mathcal{H}_{\iota }^{n}\oplus \mathcal{H}_{\iota }^{n}\right)
.\ $From 
\begin{eqnarray*}
&&\left. \Phi ^{\left( n\right) }\left( X\right) \right\vert _{\mathcal{H}%
_{\iota }^{n}\oplus \mathcal{H}_{\iota }^{n}}+\varepsilon \left[ 
\begin{array}{cc}
\text{id}_{\mathcal{H}_{\iota }^{n}} & 0 \\ 
0 & \text{id}_{\mathcal{H}_{\iota }^{n}}%
\end{array}%
\right] \\
&=&\left[ 
\begin{array}{cc}
\left[ \alpha _{ij}\text{id}_{\mathcal{H}_{\iota }}\right] _{i,j=1}^{n} & 
\left. \varphi ^{\left( n\right) }\left( \left[ a_{ij}\right]
_{i,j=1}^{n}\right) \right\vert _{\mathcal{H}_{\iota }^{n}} \\ 
\left. \left( \varphi ^{\left( n\right) }\left( \left[ a_{ij}\right]
_{i,j=1}^{n}\right) \right) ^{\ast }\right\vert _{\mathcal{H}_{\iota }^{n}}
& \left[ \beta _{ij}1_{\mathcal{H}_{\iota }}\right] _{i,j=1}^{n}%
\end{array}%
\right] +\varepsilon \left[ 
\begin{array}{cc}
\text{id}_{\mathcal{H}_{\iota }^{n}} & 0 \\ 
0 & \text{id}_{\mathcal{H}_{\iota }^{n}}%
\end{array}%
\right] \\
&=&\left[ 
\begin{array}{cc}
\widetilde{\alpha _{\varepsilon }}^{\frac{1}{2}} & 0 \\ 
0 & \widetilde{\beta _{\varepsilon }}^{\frac{1}{2}}%
\end{array}%
\right] H_{\varepsilon }\left[ 
\begin{array}{cc}
\widetilde{\alpha _{\varepsilon }}^{\frac{1}{2}} & 0 \\ 
0 & \widetilde{\beta _{\varepsilon }}^{\frac{1}{2}}%
\end{array}%
\right] \geq 0
\end{eqnarray*}%
in $B\left( \mathcal{H}_{\iota }^{n}\oplus \mathcal{H}_{\iota }^{n}\right) $
for all $\varepsilon >0,$ we deduce that $\left. \Phi ^{\left( n\right)
}\left( X\right) \right\vert _{\mathcal{H}_{\iota }^{n}\oplus \mathcal{H}%
_{\iota }^{n}}\geq 0$ in $B\left( \mathcal{H}_{\iota }^{n}\oplus \mathcal{H}%
_{\iota }^{n}\right) .$

If $X=\left[ 
\begin{array}{cc}
\left[ \alpha _{ij}1_{\mathcal{A}}\right] _{i,j=1}^{n} & \left[ a_{ij}\right]
_{i,j=1}^{n} \\ 
\left( \left[ b_{ij}\right] _{i,j=1}^{n}\right) ^{\ast } & \left[ \beta
_{ij}1_{\mathcal{A}}\right] _{i,j=1}^{n}%
\end{array}%
\right] =_{\lambda }0$, then%
\begin{equation*}
\left[ 
\begin{array}{cc}
\left[ \alpha _{ij}1_{\mathcal{A}_{\lambda }}\right] _{i,j=1}^{n} & \left[
\pi _{\lambda }^{\mathcal{A}}\left( a_{ij}\right) \right] _{i,j=1}^{n} \\ 
\left( \left[ \pi _{\lambda }^{\mathcal{A}}\left( b_{ij}\right) \right]
_{i,j=1}^{n}\right) ^{\ast } & \left[ \beta _{ij}1_{\mathcal{A}_{\lambda }}%
\right] _{i,j=1}^{n}%
\end{array}%
\right] =0
\end{equation*}%
in $M_{2}\left( M_{n}\left( \mathcal{A}_{\lambda }\right) \right) $ and
consequently, $\pi _{\lambda }^{\mathcal{A}}\left( a_{ij}\right) =\pi
_{\lambda }^{\mathcal{A}}\left( b_{ij}\right) _{i,j=1}^{n}=0$ and $\alpha
_{ij}=$ $\beta _{ij}=0$\ for all $i,j\in \{1,2,...,n\}.$ Since $\left\Vert
\left. \varphi ^{\left( n\right) }\left( A\right) \right\vert _{\mathcal{H}%
_{\iota }^{n}}\right\Vert \leq p_{\lambda }^{n}\left( A\right) $ for all $%
A\in M_{n}\left( \mathcal{M}\right) ,$ it follows that $\left. \varphi
^{\left( n\right) }\left( \left[ a_{ij}\right] _{i,j=1}^{n}\right)
\right\vert _{\mathcal{H}_{\iota }^{n}}=\left. \varphi ^{\left( n\right)
}\left( \left[ b_{ij}\right] _{i,j=1}^{n}\right) \right\vert _{\mathcal{H}%
_{\iota }^{n}}=0.$ Therefore, 
\begin{equation*}
\left. \Phi ^{\left( n\right) }\left( X\right) \right\vert _{\mathcal{H}%
_{\iota }^{n}\oplus \mathcal{H}_{\iota }^{n}}=\left[ 
\begin{array}{cc}
\left[ \alpha _{ij}\text{id}_{\mathcal{H}_{\iota }}\right] _{i,j=1}^{n} & 
\left. \varphi ^{\left( n\right) }\left( \left[ a_{ij}\right]
_{i,j=1}^{n}\right) \right\vert _{\mathcal{H}_{\iota }^{n}} \\ 
\left. \left( \varphi ^{\left( n\right) }\left( \left[ a_{ij}\right]
_{i,j=1}^{n}\right) \right) ^{\ast }\right\vert _{\mathcal{H}_{\iota }^{n}}
& \left[ \beta _{ij}\text{id}_{\mathcal{H}_{\iota }}\right] _{i,j=1}^{n}%
\end{array}%
\right] =\left[ 
\begin{array}{cc}
0 & 0 \\ 
0 & 0%
\end{array}%
\right] .
\end{equation*}
\end{proof}

\begin{remark}
The local $\mathcal{CP}$-map $\Phi :S_{\mathcal{M}}\rightarrow C^{\ast
}\left( \mathcal{D}_{\mathcal{E\oplus E}}\right) $ constructed in the above
lemma is unital and by \cite[Corolarry 4.1]{D}, it is a local $\mathcal{CC}$%
-map$.$
\end{remark}

Using the above lemma and Arveson's extension theorem (Theorem \ref{A}) we
extend the Wittstock's extension theorem \cite[Theorem 8.2]{P} in the
context of unbounded local $\mathcal{CC}$ -maps.

\begin{theorem}
Let $\mathcal{A}$ be a unital locally $C^{\ast }$-algebra, $\mathcal{%
M\subseteq }$ $\mathcal{A}$ be a local operator space, $\{\mathcal{H},%
\mathcal{E=\{H}_{n}\mathcal{\}}_{n},\mathcal{D}_{\mathcal{E}}\}\ $be a
quantized Fr\'{e}chet domain in a Hilbert space $\mathcal{H}$ and $\varphi :%
\mathcal{M\rightarrow }C^{\ast }\left( \mathcal{D}_{\mathcal{E}}\right) \ $%
be a local $\mathcal{CC}$-map. Then there exists a local $\mathcal{CC}$-map $%
\widetilde{\varphi }:$ $\mathcal{A\rightarrow }C^{\ast }\left( \mathcal{D}_{%
\mathcal{E}}\right) $ extending $\varphi .$
\end{theorem}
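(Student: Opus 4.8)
The plan is to follow the classical off-diagonal argument for Wittstock's theorem, now made "local-convex" via the two tools just developed. First I would form the local operator system $S_{\mathcal{M}}\subseteq M_{2}(\mathcal{A})$ and apply Lemma \ref{1} to the given local $\mathcal{CC}$-map $\varphi:\mathcal{M}\to C^{\ast}(\mathcal{D}_{\mathcal{E}})$, obtaining a unital local $\mathcal{CP}$-map $\Phi:S_{\mathcal{M}}\to C^{\ast}(\mathcal{D}_{\mathcal{E}\oplus\mathcal{E}})$. Since $\mathcal{E}\oplus\mathcal{E}=\{\mathcal{H}_{n}\oplus\mathcal{H}_{n}\}_{n}$ is again a quantized Fr\'echet domain in $\mathcal{H}\oplus\mathcal{H}$, Theorem \ref{A} (the local Arveson extension theorem) applies and yields a local $\mathcal{CP}$-map $\widetilde{\Phi}:M_{2}(\mathcal{A})\to C^{\ast}(\mathcal{D}_{\mathcal{E}\oplus\mathcal{E}})$ extending $\Phi$.

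Next I would extract the $(1,2)$ corner. Identifying $C^{\ast}(\mathcal{D}_{\mathcal{E}\oplus\mathcal{E}})$ with $M_{2}(C^{\ast}(\mathcal{D}_{\mathcal{E}}))=C^{\ast}(\mathcal{D}_{\mathcal{E}^{2}})$, write $\widetilde{\Phi}\big(\big[\begin{smallmatrix} x_{11} & x_{12} \\ x_{21} & x_{22}\end{smallmatrix}\big]\big)=\big[\begin{smallmatrix}\widetilde{\Phi}_{11} & \widetilde{\Phi}_{12} \\ \widetilde{\Phi}_{21} & \widetilde{\Phi}_{22}\end{smallmatrix}\big]$ and set $\widetilde{\varphi}(a)=\widetilde{\Phi}_{12}\big(\big[\begin{smallmatrix} 0 & a \\ 0 & 0\end{smallmatrix}\big]\big)$ for $a\in\mathcal{A}$. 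This is a linear map $\mathcal{A}\to C^{\ast}(\mathcal{D}_{\mathcal{E}})$, and since for $a\in\mathcal{M}$ we have $\big[\begin{smallmatrix} 0 & a \\ 0 & 0\end{smallmatrix}\big]\in S_{\mathcal{M}}$ with $\Phi\big(\big[\begin{smallmatrix} 0 & a \\ 0 & 0\end{smallmatrix}\big]\big)=\big[\begin{smallmatrix} 0 & \varphi(a) \\ 0 & 0\end{smallmatrix}\big]$, the map $\widetilde{\varphi}$ extends $\varphi$. It remains to check that $\widetilde{\varphi}$ is local $\mathcal{CC}$. Fix $\iota$; since $\widetilde{\Phi}$ is local $\mathcal{CP}$ there is $\lambda\in\Lambda$ controlling all amplifications at level $\mathcal{H}_{\iota}$. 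For $[a_{ij}]\in M_{n}(\mathcal{A})$ one embeds it as the $(1,2)$-corner of an element of $M_{n}(M_{2}(\mathcal{A}))=M_{2}(M_{n}(\mathcal{A}))$; the positivity of $\widetilde{\Phi}^{(n)}$ restricted to $\mathcal{H}_{\iota}^{n}\oplus\mathcal{H}_{\iota}^{n}$ applied to the $2\times2$ block matrix $\big[\begin{smallmatrix} p_{\lambda}^{n}([a_{ij}])\,1 & [a_{ij}] \\ [a_{ij}]^{\ast} & p_{\lambda}^{n}([a_{ij}])\,1\end{smallmatrix}\big]$ (which is $\lambda$-positive in $M_{2}(M_{n}(\mathcal{A}))$ since its norm in the quotient is at most $2p_{\lambda}^{n}([a_{ij}])$ via the $2\times2$ positivity criterion) forces $\big\|\,\widetilde{\varphi}^{(n)}([a_{ij}])|_{\mathcal{H}_{\iota}^{n}}\big\|\le p_{\lambda}^{n}([a_{ij}])$, which is exactly the local $\mathcal{CC}$ estimate, uniform over $n$.

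I expect the main obstacle to be the bookkeeping around the identifications $M_{n}(M_{2}(\mathcal{A}))\cong M_{2}(M_{n}(\mathcal{A}))$ and $C^{\ast}(\mathcal{D}_{\mathcal{E}\oplus\mathcal{E}})\cong M_{2}(C^{\ast}(\mathcal{D}_{\mathcal{E}}))$, together with checking that the single $\lambda$ produced by Theorem \ref{A}'s construction of $\widetilde{\Phi}$ genuinely works simultaneously for \emph{all} $n$ in the local $\mathcal{CC}$ inequality for $\widetilde{\varphi}$ — this is where one must be careful that "local $\mathcal{CP}$" is an honest matricial-uniform statement at the fixed domain level $\mathcal{H}_{\iota}$, so that passing to the off-diagonal corner preserves uniformity. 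A secondary technical point is verifying that $\widetilde{\varphi}(a)$ actually lands in $C^{\ast}(\mathcal{D}_{\mathcal{E}})$ and not merely in some space of unbounded operators; this follows because $\widetilde{\Phi}(a)\in C^{\ast}(\mathcal{D}_{\mathcal{E}\oplus\mathcal{E}})$ has a diagonal representation with respect to the $S_{n}\oplus S_{n}$, and the corner of a diagonal element is diagonal with respect to the $S_{n}$, so $\widetilde{\varphi}(a)$ restricts to a bounded operator on each $\mathcal{H}_{n}$ and commutes appropriately with the projections $P_{n}$. Once these identifications are nailed down, the positivity-of-$2\times2$-blocks argument is the same one used classically and the proof closes.
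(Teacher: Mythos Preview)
Your overall strategy --- form $S_{\mathcal{M}}\subseteq M_{2}(\mathcal{A})$, apply Lemma~\ref{1} to get a unital local $\mathcal{CP}$-map $\Phi$ on $S_{\mathcal{M}}$, extend via Theorem~\ref{A} to a unital local $\mathcal{CP}$-map $\widetilde{\Phi}:M_{2}(\mathcal{A})\to C^{\ast}(\mathcal{D}_{\mathcal{E}\oplus\mathcal{E}})$, and define $\widetilde{\varphi}(a)$ as the $(1,2)$ corner of $\widetilde{\Phi}\bigl[\begin{smallmatrix}0&a\\0&0\end{smallmatrix}\bigr]$ --- is exactly the paper's argument, and your remarks about the identifications $M_{n}(M_{2}(\cdot))\cong M_{2}(M_{n}(\cdot))$ and about $\widetilde{\varphi}(a)\in C^{\ast}(\mathcal{D}_{\mathcal{E}})$ are on point.

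The one genuine gap is in your verification that $\widetilde{\varphi}$ is local $\mathcal{CC}$. Feeding the symmetric block $\bigl[\begin{smallmatrix} t\,1 & A\\ A^{\ast} & t\,1\end{smallmatrix}\bigr]$ (with $t=p_{\lambda}^{n}(A)$) into $\widetilde{\Phi}^{(n)}$ does yield a positive operator, but its $(1,2)$ corner is \emph{not} $\widetilde{\varphi}^{(n)}(A)$. Writing $\widetilde{\Phi}^{(n)}\bigl[\begin{smallmatrix}0&A\\0&0\end{smallmatrix}\bigr]=\bigl[\begin{smallmatrix}P&\widetilde{\varphi}^{(n)}(A)\\Q&R\end{smallmatrix}\bigr]$, the $(1,2)$ corner you actually obtain is $\widetilde{\varphi}^{(n)}(A)+Q^{\ast}$, and the diagonals are $t\,I+P+P^{\ast}$, $t\,I+R+R^{\ast}$. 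The Arveson-type extension of Theorem~\ref{A} gives no control over $P,Q,R$ outside $S_{\mathcal{M}}$, so positivity of this block does not force $\bigl\|\widetilde{\varphi}^{(n)}(A)|_{\mathcal{H}_{\iota}^{n}}\bigr\|\le t$.

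The paper sidesteps this cleanly: since $\widetilde{\Phi}$ is \emph{unital} local $\mathcal{CP}$, it is local $\mathcal{CC}$ by \cite[Corollary~4.1]{D}. Using the \emph{asymmetric} embedding $A_{ij}=\bigl[\begin{smallmatrix}0&a_{ij}\\0&0\end{smallmatrix}\bigr]$, the $(1,2)$ corner of $\widetilde{\Phi}^{(k)}([A_{ij}])$ is $\widetilde{\varphi}^{(k)}([a_{ij}])$ by definition, and then
\[
\bigl\|\widetilde{\varphi}^{(k)}([a_{ij}])|_{\mathcal{H}_{n}^{k}}\bigr\|
\le\bigl\|\widetilde{\Phi}^{(k)}([A_{ij}])|_{\mathcal{H}_{n}^{2k}}\bigr\|
\le p_{\lambda_{n}}^{2k}([A_{ij}])
= p_{\lambda_{n}}^{k}([a_{ij}]),
\]
with a single $\lambda_{n}$ working for all $k$. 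Replace your $2\times2$-positivity step by this norm estimate and the proof closes.
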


\begin{proof}
By Lemma \ref{1}, $\varphi :\mathcal{M\rightarrow }C^{\ast }\left( \mathcal{D%
}_{\mathcal{E}}\right) $ extends to a unital local $\mathcal{CP}$-map $\Phi
:S_{\mathcal{M}}\rightarrow C^{\ast }\left( \mathcal{D}_{\mathcal{E\oplus E}%
}\right) $ and by Theorem \ref{A}, $\Phi $ extends to a unital local $%
\mathcal{CP}$-map $\widetilde{\Phi }:M_{2}\left( \mathcal{A}\right)
\rightarrow C^{\ast }\left( \mathcal{D}_{\mathcal{E\oplus E}}\right) $.
Moreover, for each $a\in $ $\mathcal{M},$ $\varphi \left( a\right) $ can be
identified with $\widetilde{\Phi }\left( \left[ 
\begin{array}{cc}
0 & a \\ 
0 & 0%
\end{array}%
\right] \right) .$

For each $a\in \mathcal{A}$, there exists a unique element $\widetilde{%
\varphi }\left( a\right) $ such that $\widetilde{\Phi }\left( \left[ 
\begin{array}{cc}
0 & a \\ 
0 & 0%
\end{array}%
\right] \right) =\left[ 
\begin{array}{cc}
\ast & \widetilde{\varphi }\left( a\right) \\ 
\ast & \ast%
\end{array}%
\right] $. In this way, we obtained a linear map $\widetilde{\varphi }:%
\mathcal{A\rightarrow }C^{\ast }\left( \mathcal{D}_{\mathcal{E}}\right) .$
Moreover, $\left. \widetilde{\varphi }\right\vert _{\mathcal{M}}=\varphi .$
So $\widetilde{\varphi }$ extends $\varphi $. It remains to show that $%
\widetilde{\varphi }$ is a local $\mathcal{CC}$-map.

Let $\left[ a_{ij}\right] _{i,j=1}^{k}$. We have 
\begin{eqnarray*}
\left[ 
\begin{array}{cc}
\ast & \widetilde{\varphi }^{\left( k\right) }\left( \left[ a_{ij}\right]
_{i,j=1}^{k}\right) \\ 
\ast & \ast%
\end{array}%
\right] &=&\left[ \left[ 
\begin{array}{cc}
\ast & \widetilde{\varphi }\left( a_{ij}\right) \\ 
\ast & \ast%
\end{array}%
\right] \right] _{i,j=1}^{k} \\
&=&\left[ \widetilde{\Phi }\left( \left[ 
\begin{array}{cc}
0 & a_{ij} \\ 
0 & 0%
\end{array}%
\right] \right) \right] _{i,j=1}^{k}=\left[ \widetilde{\Phi }\left(
A_{ij}\right) \right] _{i,j=1}^{k} \\
&=&\widetilde{\Phi }^{\left( k\right) }\left( \left[ A_{ij}\right]
_{i,j=1}^{k}\right) .
\end{eqnarray*}%
Since $\widetilde{\Phi }$ is a unital local $\mathcal{CP}$-map, it is a
local $\mathcal{CC}$ -map \cite[Corollary 4.1]{D}. Then, for each $n$, there
exists $\lambda _{n}\in \Lambda $ such that 
\begin{eqnarray*}
\left\Vert \left. \widetilde{\varphi }^{\left( k\right) }\left( \left[ a_{ij}%
\right] _{i,j=1}^{k}\right) \right\vert _{\mathcal{H}_{n}^{k}}\right\Vert
&\leq &\left\Vert \left[ \left. \widetilde{\Phi }\left( A_{ij}\right)
\right\vert _{\mathcal{H}_{n}^{2}}\right] _{i,j=1}^{k}\right\Vert \leq
p_{\lambda _{n}}^{2k}\left( \left[ A_{ij}\right] _{i,j=1}^{k}\right) \\
&=&p_{\lambda _{n}}^{k}\left( \left[ a_{ij}\right] _{i,j=1}^{k}\right)
\end{eqnarray*}%
for all $\left[ a_{ij}\right] _{i,j=1}^{k}\in M_{k}\left( \mathcal{A}\right) 
$ and for all $k$. Therefore, $\widetilde{\varphi }$ is a local $\mathcal{CC}
$-map.
\end{proof}

The above theorem is a particular case of \cite[Theorem 8.1]{D}.

\section{Stinespring type theorem for local $\mathcal{CC}$-maps}

In \cite{D}, Dosiev proved a local convex version of well-known
Stinespring's dilation theorem for $\mathcal{CP}$-maps on $C^{\ast }$%
-algebras.

\begin{theorem}
\label{s} \cite[Theorem 5.1]{D} Let $\mathcal{A\ }$be a unital locally $%
C^{\ast }$-algebras and $\varphi :\mathcal{A}\rightarrow C^{\ast }(\mathcal{D%
}_{\mathcal{E}}))$ be a local $\mathcal{CCP}$-map. Then there exist a
quantized domain $\{\mathcal{H}^{\varphi },\mathcal{E}^{\varphi },\mathcal{D}%
_{\mathcal{E}^{\varphi }}\}$, where $\mathcal{E}^{\varphi }=\{\mathcal{H}%
_{\iota }^{\varphi };\iota \in \Upsilon \}$ is an upward filtered family of
closed subspaces of $\mathcal{H}^{\varphi }$, a contraction $V_{\varphi }:%
\mathcal{H}\rightarrow \mathcal{H}^{\varphi }$ and a unital local
contractive $\ast $-homomorphism $\pi _{\varphi }:\mathcal{A\rightarrow }%
C^{\ast }(\mathcal{D}_{\mathcal{E}^{\varphi }})$ such that

\begin{enumerate}
\item $V_{\varphi }\left( \mathcal{E}\right) \subseteq \mathcal{E}^{\varphi
};$

\item $\varphi \left( a\right) \subseteq V_{\varphi }^{\ast }\pi _{\varphi
}\left( a\right) V_{\varphi };$

\item $\mathcal{H}_{\iota }^{\varphi }=\left[ \pi _{\varphi }\left( \mathcal{%
A}\right) V_{\varphi }\mathcal{H}_{\iota }\right] $ for all $\iota \in
\Upsilon .$

Moreover, if $\varphi \left( 1_{\mathcal{A}}\right) =$id$_{\mathcal{D}_{%
\mathcal{E}}}$, then $V_{\varphi }$ is an isometry.
\end{enumerate}
\end{theorem}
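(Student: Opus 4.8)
The plan is to run the Stinespring/GNS construction on the algebraic tensor product $\mathcal{A}\otimes\mathcal{D}_{\mathcal{E}}$, keeping careful track of the quantized structure. First I would equip $\mathcal{A}\otimes\mathcal{D}_{\mathcal{E}}$ with the sesquilinear form
\[
\left\langle \sum_k a_k\otimes\xi_k,\ \sum_j b_j\otimes\eta_j\right\rangle_{\varphi}=\sum_{j,k}\left\langle \varphi(b_j^{\ast}a_k)\xi_k,\eta_j\right\rangle .
\]
Given finitely many vectors $\xi_1,\dots,\xi_n$, filteredness of $\mathcal{E}$ puts them all in a common $\mathcal{H}_{\iota}$; since $\left[a_j^{\ast}a_k\right]_{j,k}$ is positive in $M_n(\mathcal{A})$, hence $\lambda$-positive for the index $\lambda=\lambda(\iota)$ supplied by local $\mathcal{CP}$-ness at $\iota$, the operator $\varphi^{(n)}\!\left(\left[a_j^{\ast}a_k\right]\right)\big|_{\mathcal{H}_{\iota}^{n}}$ is positive, so $\langle\cdot,\cdot\rangle_{\varphi}\ge0$. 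Quotienting by the null space $N_{\varphi}$ and completing yields a Hilbert space $\mathcal{H}^{\varphi}$; I would set $V_{\varphi}\xi=1_{\mathcal{A}}\otimes\xi+N_{\varphi}$ and $\pi_{\varphi}(c)(a\otimes\xi+N_{\varphi})=ca\otimes\xi+N_{\varphi}$, and define $\mathcal{H}_{\iota}^{\varphi}=\left[\pi_{\varphi}(\mathcal{A})V_{\varphi}\mathcal{H}_{\iota}\right]$.

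The heart of the argument is the estimate showing $\pi_{\varphi}$ is a well-defined \emph{local} contractive $\ast$-homomorphism. Fix $\iota$ and put $\lambda=\lambda(\iota)$. For $c\in\mathcal{A}$ one has $p_{\lambda}(c)^{2}1_{\mathcal{A}}-c^{\ast}c\ge_{\lambda}0$, so there is $e\in\mathcal{A}$ with $e^{\ast}e=_{\lambda}p_{\lambda}(c)^{2}1_{\mathcal{A}}-c^{\ast}c$ (lift a square root along the surjection $\pi_{\lambda}^{\mathcal{A}}$). Then $\left[a_j^{\ast}e^{\ast}ea_k\right]$ is positive in $M_n(\mathcal{A})$ and equals $p_{\lambda}(c)^{2}\left[a_j^{\ast}a_k\right]-\left[a_j^{\ast}c^{\ast}ca_k\right]$ modulo a $\lambda$-null matrix; applying $\varphi^{(n)}$ and restricting to $\mathcal{H}_{\iota}^{n}$ therefore gives, after pairing with the vectors $\xi_k\in\mathcal{H}_{\iota}$,
\[
\left\langle \pi_{\varphi}(c)x,\pi_{\varphi}(c)x\right\rangle_{\varphi}\le p_{\lambda}(c)^{2}\left\langle x,x\right\rangle_{\varphi}
\]
for every $x=\sum_k a_k\otimes\xi_k$ with all $\xi_k\in\mathcal{H}_{\iota}$. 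Hence $\pi_{\varphi}(c)$ preserves $N_{\varphi}$, descends to the quotient, and $\pi_{\varphi}(c)\big|_{\mathcal{H}_{\iota}^{\varphi}}$ is bounded by $p_{\lambda(\iota)}(c)$. Filteredness of $\mathcal{E}$ gives $\mathcal{H}_{\iota_1}^{\varphi}\subseteq\mathcal{H}_{\iota_2}^{\varphi}$ for $\iota_1\le\iota_2$, and the image of $\mathcal{A}\otimes\mathcal{D}_{\mathcal{E}}$ is dense in $\mathcal{H}^{\varphi}$, so $\{\mathcal{H}^{\varphi},\mathcal{E}^{\varphi}=\{\mathcal{H}_{\iota}^{\varphi}\}_{\iota},\mathcal{D}_{\mathcal{E}^{\varphi}}\}$ is a quantized domain. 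Since $\pi_{\varphi}(\mathcal{A})$ visibly carries $\pi_{\varphi}(\mathcal{A})V_{\varphi}\mathcal{H}_{\iota}$ into itself and $\pi_{\varphi}(c)^{\ast}=\pi_{\varphi}(c^{\ast})$ on these vectors (a direct computation with the form), one gets $\pi_{\varphi}(c)\big(\mathcal{H}_{\iota}^{\varphi\perp}\cap\mathcal{D}_{\mathcal{E}^{\varphi}}\big)\subseteq\mathcal{H}_{\iota}^{\varphi\perp}\cap\mathcal{D}_{\mathcal{E}^{\varphi}}$; by the characterization of $C^{\ast}(\mathcal{D}_{\mathcal{E}^{\varphi}})$ recalled in Section~2 this means $\pi_{\varphi}(c)\in C^{\ast}(\mathcal{D}_{\mathcal{E}^{\varphi}})$, so $\pi_{\varphi}$ is a unital local contractive $\ast$-homomorphism.

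Finally I would read off the listed properties. Property~(3) is the definition of $\mathcal{H}_{\iota}^{\varphi}$, and (1) follows since $V_{\varphi}\mathcal{H}_{\iota}=1_{\mathcal{A}}\otimes\mathcal{H}_{\iota}\subseteq\mathcal{H}_{\iota}^{\varphi}$, so in particular $V_{\varphi}\mathcal{D}_{\mathcal{E}}\subseteq\mathcal{D}_{\mathcal{E}^{\varphi}}$. For (2), $V_{\varphi}$ is a contraction because $\|V_{\varphi}\xi\|^{2}=\langle\varphi(1_{\mathcal{A}})\xi,\xi\rangle\le\|\xi\|^{2}$, using that $\varphi(1_{\mathcal{A}})$ is local positive and local contractive, so $\varphi(1_{\mathcal{A}})\big|_{\mathcal{H}_{\iota}}\le\mathrm{id}$; then for $\xi,\eta\in\mathcal{D}_{\mathcal{E}}$ one computes $\langle V_{\varphi}^{\ast}\pi_{\varphi}(a)V_{\varphi}\xi,\eta\rangle=\langle a\otimes\xi,\,1_{\mathcal{A}}\otimes\eta\rangle_{\varphi}=\langle\varphi(a)\xi,\eta\rangle$, i.e. $\varphi(a)\subseteq V_{\varphi}^{\ast}\pi_{\varphi}(a)V_{\varphi}$, and if $\varphi(1_{\mathcal{A}})=\mathrm{id}_{\mathcal{D}_{\mathcal{E}}}$ the same identity gives $\|V_{\varphi}\xi\|=\|\xi\|$. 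I expect the main obstacle to be the second paragraph: obtaining the operator bound on $\pi_{\varphi}(c)\big|_{\mathcal{H}_{\iota}^{\varphi}}$ with an index $\lambda$ depending only on $\iota$ (not on $c$), so that $\pi_{\varphi}$ is genuinely \emph{local} contractive and actually lands in $C^{\ast}(\mathcal{D}_{\mathcal{E}^{\varphi}})$ rather than merely in $\mathcal{L}(\mathcal{D}_{\mathcal{E}^{\varphi}})$; positivity of the form and relation~(2) are then routine.
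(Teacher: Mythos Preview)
The paper does not supply its own proof of this theorem: it is quoted verbatim from Dosiev \cite[Theorem~5.1]{D} and stated without proof, serving only as background for the subsequent results on local $\mathcal{CC}$-maps. So there is nothing in the paper to compare your argument against.

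That said, your proposal is the standard GNS/Stinespring construction adapted to the quantized-domain setting, and it is essentially the argument Dosiev gives in \cite{D}. Your identification of the key technical point is on target: the crucial step is indeed that the bound $\|\pi_{\varphi}(c)|_{\mathcal{H}_{\iota}^{\varphi}}\|\le p_{\lambda}(c)$ must hold with $\lambda=\lambda(\iota)$ depending only on $\iota$, which is precisely what the definition of a local $\mathcal{CP}$-map provides. Your handling of this via lifting a square root of $p_{\lambda}(c)^{2}1_{\mathcal{A}}-c^{\ast}c$ through $\pi_{\lambda}^{\mathcal{A}}$ and exploiting that $\varphi^{(n)}$ kills $\lambda$-null matrices on $\mathcal{H}_{\iota}^{n}$ is correct. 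The verification that $\pi_{\varphi}(c)$ preserves $\mathcal{H}_{\iota}^{\varphi\perp}\cap\mathcal{D}_{\mathcal{E}^{\varphi}}$ goes through because the adjoint identity $(\pi_{\varphi}(c)|_{\mathcal{H}_{\iota'}^{\varphi}})^{\ast}=\pi_{\varphi}(c^{\ast})|_{\mathcal{H}_{\iota'}^{\varphi}}$ holds on each $\mathcal{H}_{\iota'}^{\varphi}$ by boundedness plus the algebraic identity on elementary tensors.
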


The triple $\left( \pi _{\varphi },V_{\varphi },\{\mathcal{H}^{\varphi },%
\mathcal{E}^{\varphi },\mathcal{D}_{\mathcal{E}^{\varphi }}\}\right) $ is
called a minimal Stinespring dilation of $\varphi $. Moreover, the minimal
Stinespring dilation of $\varphi $ is unique up to unitary equivalence in
the following sense, if $\left( \pi _{\varphi },V_{\varphi },\{\mathcal{H}%
^{\varphi },\mathcal{E}^{\varphi },\mathcal{D}_{\mathcal{E}^{\varphi
}}\}\right) $ and$\left( \widetilde{\pi }_{\varphi },\widetilde{V}_{\varphi
},\{\widetilde{\mathcal{H}}^{\varphi },\widetilde{\mathcal{E}}^{\varphi },%
\widetilde{\mathcal{D}}_{\widetilde{\mathcal{E}}^{\varphi }}\}\right) $\ are
two minimal Stinespring dilations of $\varphi $, then there is a unitary
operator $U_{\varphi }:\mathcal{H}^{\varphi }\rightarrow \widetilde{\mathcal{%
H}}^{\varphi }$ such that $U_{\varphi }V_{\varphi }=\widetilde{V}_{\varphi }$
and $U_{\varphi }\pi _{\varphi }\left( a\right) \subseteq \widetilde{\pi }%
_{\varphi }\left( a\right) U_{\varphi }$ for all $a\in \mathcal{A\ }$\cite[%
Theorem 3.4]{BGK}.

If $\varphi :\mathcal{A}\rightarrow C^{\ast }(\mathcal{D}_{\mathcal{E}}))$\
is a local $\mathcal{CCP}$-map, then $\varphi \left( b(\mathcal{A})\right)
\subseteq b(C^{\ast }(\mathcal{D}_{\mathcal{E}}))$. Moreover, there is a $%
\mathcal{CCP}$- map $\left. \varphi \right\vert _{b(\mathcal{A})}:b(\mathcal{%
A})\rightarrow B(\mathcal{H})$ such that $\left. \left. \varphi \right\vert
_{b(\mathcal{A})}\left( a\right) \right\vert _{\mathcal{D}_{\mathcal{E}%
}}=\varphi \left( a\right) \ $for all $a\in b(\mathcal{A})\ $\cite[Remark 3.6%
]{MJ3}, and if $\left( \pi _{\varphi },V_{\varphi },\{\mathcal{H}^{\varphi },%
\mathcal{E}^{\varphi },\mathcal{D}_{\mathcal{E}^{\varphi }}\}\right) $ is a
minimal Stinespring dilation of $\varphi $, then $\left( \left. \pi
_{\varphi }\right\vert _{b(\mathcal{A})},V_{\varphi },\mathcal{H}^{\varphi
}\right) $, where $\left. \left. \pi _{\varphi }\right\vert _{b(\mathcal{A}%
)}\left( a\right) \right\vert _{\mathcal{D}_{\widetilde{\mathcal{E}}%
^{\varphi }}}=\pi _{\varphi }\left( a\right) \ $for all $a\in b(\mathcal{A}%
), $ is a minimal Stinespring dilation of $\left. \varphi \right\vert _{b(%
\mathcal{A})}$ \cite[Remark 3.8]{MJ3}.

In this section we will prove a local convex version of the Stinespring type
theorem for local $\mathcal{CC}$-maps. First we will show that, as in the
case of completely bounded maps on $C^{\ast }$-algebras, an unbounded local $%
\mathcal{CC}$-map on a unital locally $C^{\ast }$-algebra $\mathcal{A}$
induces a unital local $\mathcal{CP}$- map on $M_{2}\left( \mathcal{A}%
\right) .$

\begin{theorem}
\label{2} Let $\mathcal{A}$ be a unital locally $C^{\ast }$-algebra, $\{%
\mathcal{H},\mathcal{E=\{H}_{n}\mathcal{\}}_{n},\mathcal{D}_{\mathcal{E}}\}\ 
$be a quantized Fr\'{e}chet domain in a Hilbert space $\mathcal{H}$ and $%
\varphi :\mathcal{A\rightarrow }C^{\ast }\left( \mathcal{D}_{\mathcal{E}%
}\right) \ $be a local $\mathcal{CC}$-map. Then there exist two unital local 
$\mathcal{CP}$-maps $\varphi _{i}:A\rightarrow C^{\ast }\left( \mathcal{D}_{%
\mathcal{E}}\right) ,i=1,2,$ such that $\Phi :M_{2}\left( \mathcal{A}\right)
\rightarrow C^{\ast }\left( \mathcal{D}_{\mathcal{E\oplus E}}\right) \ $%
given by 
\begin{equation*}
\Phi \left( \left[ 
\begin{array}{cc}
a & b \\ 
c & d%
\end{array}%
\right] \right) =\left[ 
\begin{array}{cc}
\varphi _{1}\left( a\right) & \varphi \left( b\right) \\ 
\varphi \left( c\right) ^{\ast } & \varphi _{2}\left( d\right)%
\end{array}%
\right]
\end{equation*}%
is a unital local $\mathcal{CP}$-map.
\end{theorem}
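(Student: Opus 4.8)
The plan is to mimic the classical $C^{\ast}$-algebra argument (as in \cite[Theorem 8.3]{P}) in the local convex setting, using the extension theorem already proved in the previous section. Starting from the local $\mathcal{CC}$-map $\varphi:\mathcal{A}\rightarrow C^{\ast}(\mathcal{D}_{\mathcal{E}})$, first apply Lemma \ref{1} to the local operator space $\mathcal{M}=\mathcal{A}$: this produces a unital local $\mathcal{CP}$-map $\Phi_{0}:S_{\mathcal{A}}\rightarrow C^{\ast}(\mathcal{D}_{\mathcal{E}\oplus\mathcal{E}})$ sending $\left[\begin{smallmatrix}\alpha 1_{\mathcal{A}} & a\\ b^{\ast} & \beta 1_{\mathcal{A}}\end{smallmatrix}\right]$ to $\left[\begin{smallmatrix}\alpha\,\mathrm{id} & \varphi(a)\\ \varphi(b)^{\ast} & \beta\,\mathrm{id}\end{smallmatrix}\right]$. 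By the Remark following Lemma \ref{1}, $\Phi_{0}$ is unital, hence local $\mathcal{CC}$ by \cite[Corollary 4.1]{D}. Then invoke Arveson's local extension theorem (Theorem \ref{A}), applicable because $S_{\mathcal{A}}\subseteq M_{2}(\mathcal{A})$ is a local operator system and $M_{2}(\mathcal{A})$ is again a unital locally $C^{\ast}$-algebra over the same Fr\'echet domain $\mathcal{E}\oplus\mathcal{E}$, to obtain a local $\mathcal{CP}$-map $\Phi:M_{2}(\mathcal{A})\rightarrow C^{\ast}(\mathcal{D}_{\mathcal{E}\oplus\mathcal{E}})$ extending $\Phi_{0}$.

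Next I would unpack the matrix structure of $\Phi$. Writing $e_{11},e_{22}\in M_{2}(\mathcal{A})$ for the diagonal matrix units and $e_{12},e_{21}$ for the off-diagonal ones, define $\varphi_{1}(a)$ as the $(1,1)$-corner of $\Phi(a\,e_{11})$ and $\varphi_{2}(d)$ as the $(2,2)$-corner of $\Phi(d\,e_{22})$; that these corners actually land in $C^{\ast}(\mathcal{D}_{\mathcal{E}})$ and not merely in some larger operator algebra follows from the block-diagonal compatibility that $\Phi$, as a map into $C^{\ast}(\mathcal{D}_{\mathcal{E}\oplus\mathcal{E}})$, must respect relative to the projections onto the two copies of $\mathcal{D}_{\mathcal{E}}$. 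The key structural point, exactly as in the bounded case, is that since $\Phi$ is unital and extends $\Phi_{0}$ on $S_{\mathcal{A}}$, the off-diagonal corner $\Phi\!\left(\left[\begin{smallmatrix}0 & a\\ 0 & 0\end{smallmatrix}\right]\right)$ must equal $\left[\begin{smallmatrix}0 & \varphi(a)\\ 0 & 0\end{smallmatrix}\right]$: indeed $\left[\begin{smallmatrix}0 & a\\ 0 & 0\end{smallmatrix}\right]=\left[\begin{smallmatrix}0 & a\\ 0 & 0\end{smallmatrix}\right]$ lies in $S_{\mathcal{A}}$ (as the difference $\left[\begin{smallmatrix}0 & a\\ 0 & 0\end{smallmatrix}\right]=\left[\begin{smallmatrix}0 & a\\ 0 & 0\end{smallmatrix}\right]$ of entries in the local operator system), so $\Phi$ agrees with $\Phi_{0}$ there; similarly the off-diagonal of $\Phi\!\left(\left[\begin{smallmatrix}0 & 0\\ c & 0\end{smallmatrix}\right]\right)$ is $\varphi(c)^{\ast}$. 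Linearity of $\Phi$ then forces $\Phi\!\left(\left[\begin{smallmatrix}a & b\\ c & d\end{smallmatrix}\right]\right)=\left[\begin{smallmatrix}\varphi_{1}(a) & \varphi(b)\\ \varphi(c)^{\ast} & \varphi_{2}(d)\end{smallmatrix}\right]$, which is the asserted form.

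Finally I would record that $\varphi_{1},\varphi_{2}$ are unital local $\mathcal{CP}$-maps: unitality of $\Phi$ (which it inherits from $\Phi_{0}$ through the extension, since $\Phi(1_{M_{2}(\mathcal{A})})=\Phi_{0}(1_{S_{\mathcal{A}}})=\mathrm{id}_{\mathcal{D}_{\mathcal{E}\oplus\mathcal{E}}}$) forces $\varphi_{1}(1_{\mathcal{A}})=\varphi_{2}(1_{\mathcal{A}})=\mathrm{id}_{\mathcal{D}_{\mathcal{E}}}$; and local complete positivity of each $\varphi_{i}$ follows by compressing the local $\mathcal{CP}$-map $\Phi$ to the relevant diagonal block, using that $\varphi_{i}^{(n)}([a_{kl}])$ is the $i$-th diagonal block of $\Phi^{(n)}$ applied to the diagonal embedding $[a_{kl}]\mapsto [a_{kl}e_{ii}]$, which preserves $\lambda$-positivity and $\lambda$-nullity. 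The main obstacle I anticipate is the bookkeeping in the second paragraph: verifying that the corners of $\Phi$ genuinely take values in $C^{\ast}(\mathcal{D}_{\mathcal{E}})$ rather than in a strictly larger algebra of unbounded operators, which requires checking the $S_{n}$-diagonal-representation compatibility of $\Phi$ with the two summands of $\mathcal{E}\oplus\mathcal{E}$ — one must confirm that the off-diagonal entry $\widetilde\varphi(a)$ (in the notation of the preceding theorem's proof) satisfies $\widetilde\varphi(a)(\mathcal{H}_{n})\subseteq\mathcal{H}_{n}$ and restricts to a bounded operator on each $\mathcal{H}_{n}$, which is inherited from the corresponding property of $\Phi(a\,e_{12})$ in $C^{\ast}(\mathcal{D}_{\mathcal{E}\oplus\mathcal{E}})$; everything else is the routine transcription of Paulsen's off-diagonal argument into the projective-limit language.
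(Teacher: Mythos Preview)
Your overall approach matches the paper's --- apply Lemma~\ref{1} to get $\Phi_{0}$ on $S_{\mathcal{A}}$, extend via Theorem~\ref{A} to $\Phi$ on $M_{2}(\mathcal{A})$, then read off the corners --- but there is a genuine gap in the middle step, and you have misidentified where the difficulty lies.

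The issue is not whether the corners of $\Phi(a\,e_{11})$ land in $C^{\ast}(\mathcal{D}_{\mathcal{E}})$; they do automatically, since $C^{\ast}(\mathcal{D}_{\mathcal{E}\oplus\mathcal{E}})\cong M_{2}(C^{\ast}(\mathcal{D}_{\mathcal{E}}))$ as noted in the preliminaries. The real problem is that you have not shown $\Phi(a\,e_{11})$ vanishes \emph{outside} the $(1,1)$ corner. The element $a\,e_{11}=\left[\begin{smallmatrix}a&0\\0&0\end{smallmatrix}\right]$ lies in $S_{\mathcal{A}}$ only when $a\in\mathbb{C}1_{\mathcal{A}}$, so for general $a$ the extension $\Phi$ could a~priori produce $\left[\begin{smallmatrix}\varphi_{1}(a)&X(a)\\Y(a)&Z(a)\end{smallmatrix}\right]$ with nonzero $X,Y,Z$. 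Your sentence ``Linearity of $\Phi$ then forces $\Phi\bigl(\left[\begin{smallmatrix}a&b\\c&d\end{smallmatrix}\right]\bigr)=\left[\begin{smallmatrix}\varphi_{1}(a)&\varphi(b)\\\varphi(c)^{\ast}&\varphi_{2}(d)\end{smallmatrix}\right]$'' simply does not follow from what precedes it, and the vague appeal to ``block-diagonal compatibility that $\Phi$ must respect relative to the projections onto the two copies of $\mathcal{D}_{\mathcal{E}}$'' is unfounded: nothing about mapping into $C^{\ast}(\mathcal{D}_{\mathcal{E}\oplus\mathcal{E}})$ forces an arbitrary local $\mathcal{CP}$ extension to be block-diagonal on diagonal inputs.

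The paper fills this gap with an order argument: since $\Phi$ is unital local $\mathcal{CP}$, hence (continuous) completely positive, for $0\leq a\leq 1_{\mathcal{A}}$ in $b(\mathcal{A})$ one has
\[
0\;\leq\;\Phi\!\left(\left[\begin{smallmatrix}a&0\\0&0\end{smallmatrix}\right]\right)\;\leq\;\Phi\!\left(\left[\begin{smallmatrix}1_{\mathcal{A}}&0\\0&0\end{smallmatrix}\right]\right)\;=\;\Phi_{0}\!\left(\left[\begin{smallmatrix}1_{\mathcal{A}}&0\\0&0\end{smallmatrix}\right]\right)\;=\;\left[\begin{smallmatrix}1&0\\0&0\end{smallmatrix}\right],
\]
which forces $\Phi(a\,e_{11})=\left[\begin{smallmatrix}\ast&0\\0&0\end{smallmatrix}\right]$. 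One then extends this conclusion to all $a\in\mathcal{A}$ by linearity, density of $b(\mathcal{A})$, and continuity of $\Phi$. This is precisely the step your proposal is missing; once it is in place, the rest of your outline (unitality of $\varphi_{i}$, local $\mathcal{CP}$ via compression or equivalently local $\mathcal{CC}$ plus unitality) goes through as in the paper.
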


\begin{proof}
By Lemma \ref{1}, there exists a unital local $\mathcal{CP}$-map $\Phi _{1}:%
\mathcal{S}_{\mathcal{A}}\rightarrow C^{\ast }\left( \mathcal{D}_{\mathcal{%
E\oplus E}}\right) $ such that%
\begin{equation*}
\Phi _{1}\left( \left[ 
\begin{array}{cc}
\alpha 1_{\mathcal{A}} & a \\ 
b & \beta 1_{\mathcal{A}}%
\end{array}%
\right] \right) =\left[ 
\begin{array}{cc}
\alpha 1_{C^{\ast }\left( \mathcal{D}_{\mathcal{E}}\right) } & \varphi
\left( a\right) \\ 
\varphi \left( b\right) ^{\ast } & \beta 1_{C^{\ast }\left( \mathcal{D}_{%
\mathcal{E}}\right) }%
\end{array}%
\right]
\end{equation*}%
and by Theorem \ref{A}, there exists a unital local $\mathcal{CP}$-map $\Phi
:M_{2}\left( \mathcal{A}\right) \rightarrow C^{\ast }\left( \mathcal{D}_{%
\mathcal{E\oplus E}}\right) $ such that $\left. \Phi \right\vert _{\mathcal{S%
}_{\mathcal{A}}}=\Phi _{1}.$

Since $\Phi $ is a unital local $\mathcal{CP}$-map, by \cite[Corrolary 4.1]%
{D}, $\Phi $ is a local $\mathcal{CC}$-map. Therefore, $\Phi $ is a
continuous completely positive map. Let $a\in \mathcal{A}$ such that $0\leq
a\leq 1_{\mathcal{A}}.$ Then 
\begin{eqnarray*}
\left[ 
\begin{array}{cc}
0 & 0 \\ 
0 & 0%
\end{array}%
\right] &\leq &\Phi \left( \left[ 
\begin{array}{cc}
a & 0 \\ 
0 & 0%
\end{array}%
\right] \right) \leq \Phi \left( \left[ 
\begin{array}{cc}
1_{\mathcal{A}} & 0 \\ 
0 & 0%
\end{array}%
\right] \right) \\
&=&\Phi _{1}\left( \left[ 
\begin{array}{cc}
1_{\mathcal{A}} & 0 \\ 
0 & 0%
\end{array}%
\right] \right) =\left[ 
\begin{array}{cc}
1_{C^{\ast }\left( \mathcal{D}_{\mathcal{E}}\right) } & 0 \\ 
0 & 0%
\end{array}%
\right] .
\end{eqnarray*}%
Therefore, 
\begin{equation*}
\Phi \left( \left[ 
\begin{array}{cc}
a & 0 \\ 
0 & 0%
\end{array}%
\right] \right) =\left[ 
\begin{array}{cc}
\ast & 0 \\ 
0 & 0%
\end{array}%
\right] .
\end{equation*}%
Since $b(\mathcal{A})_{+}$ is a closed cone in $b(\mathcal{A}),$ $b(\mathcal{%
A})$ is dense in $\mathcal{A}$ and $\Phi $ is continuous, 
\begin{equation*}
\Phi \left( \left[ 
\begin{array}{cc}
a & 0 \\ 
0 & 0%
\end{array}%
\right] \right) =\left[ 
\begin{array}{cc}
\ast & 0 \\ 
0 & 0%
\end{array}%
\right]
\end{equation*}%
for all $a\in \mathcal{A}$. Therefore, for each $a\in \mathcal{A}$, there
exists a unique element $\varphi _{1}\left( a\right) $ such that 
\begin{equation*}
\Phi \left( \left[ 
\begin{array}{cc}
a & 0 \\ 
0 & 0%
\end{array}%
\right] \right) =\left[ 
\begin{array}{cc}
\varphi _{1}\left( a\right) & 0 \\ 
0 & 0%
\end{array}%
\right] .
\end{equation*}%
In this way, we obtained a linear map $\varphi _{1}:\mathcal{A\rightarrow }%
C^{\ast }\left( \mathcal{D}_{\mathcal{E}}\right) $.$\ $Clearly, $\varphi
_{1}\left( 1_{\mathcal{A}}\right) =1_{C^{\ast }\left( \mathcal{D}_{\mathcal{E%
}}\right) }$.$\ $To show that $\varphi _{1}$ is a local $\mathcal{CP}$-map,
since it is unital, it is sufficient to show that $\varphi _{1}$ is a local $%
\mathcal{CC}$-map.$\ $Let $n\in \mathbb{N}$. Since $\Phi $ is a local $%
\mathcal{CC}$-map, there exists $\lambda _{n}\in \Lambda $ such that 
\begin{equation*}
\left\Vert \left. \Phi ^{\left( k\right) }\left( \left[ A_{ij}\right]
_{i,j=1}^{k}\right) \right\vert _{\mathcal{H}_{n}^{2k}}\right\Vert \leq
p_{\lambda _{n}}^{2k}\left( \left[ A_{ij}\right] _{i,j=1}^{k}\right)
\end{equation*}%
for all $\left[ A_{ij}\right] _{i,j=1}^{k}\in M_{k}\left( M_{2}\left( 
\mathcal{A}\right) \right) $ and for all $k$. Then 
\begin{eqnarray*}
\left\Vert \left. \varphi _{1}^{\left( k\right) }\left( \left[ a_{ij}\right]
_{i,j=1}^{n}\right) \right\vert _{\mathcal{H}_{n}^{k}}\right\Vert &\leq
&\left\Vert \left. \Phi ^{\left( k\right) }\left( \left[ A_{ij}\right]
_{i,j=1}^{k}\right) \right\vert _{\mathcal{H}_{n}^{k}\oplus \mathcal{H}%
_{n}^{k}}\right\Vert \\
(A_{ij} &=&\left[ 
\begin{array}{cc}
a_{ij} & 0 \\ 
0 & 0%
\end{array}%
\right] ) \\
&\leq &p_{\lambda _{n}}^{2k}\left( \left[ A_{ij}\right] _{i,j=1}^{k}\right)
=p_{\lambda _{n}}^{k}\left( \left[ a_{ij}\right] _{i,j=1}^{k}\right)
\end{eqnarray*}%
for all $\left[ a_{ij}\right] _{i,j=1}^{k}\in M_{k}\left( \mathcal{A}\right) 
$ and for all $k$, and so $\varphi _{1}$ is a local $\mathcal{CC}$-map. In
the same way, we show that there exists a unital local $\mathcal{CP}$-map $%
\varphi _{2}:\mathcal{A\rightarrow }C^{\ast }\left( \mathcal{D}_{\mathcal{E}%
}\right) $ such that 
\begin{equation*}
\Phi \left( \left[ 
\begin{array}{cc}
0 & 0 \\ 
0 & b%
\end{array}%
\right] \right) =\left[ 
\begin{array}{cc}
0 & 0 \\ 
0 & \varphi _{2}\left( b\right)%
\end{array}%
\right] .
\end{equation*}%
We have 
\begin{eqnarray*}
\Phi \left( \left[ 
\begin{array}{cc}
a & b \\ 
c & d%
\end{array}%
\right] \right) &=&\Phi \left( \left[ 
\begin{array}{cc}
a & 0 \\ 
0 & 0%
\end{array}%
\right] \right) +\Phi \left( \left[ 
\begin{array}{cc}
0 & 0 \\ 
0 & d%
\end{array}%
\right] \right) +\Phi \left( \left[ 
\begin{array}{cc}
0 & b \\ 
c & 0%
\end{array}%
\right] \right) \\
&=&\left[ 
\begin{array}{cc}
\varphi _{1}\left( a\right) & 0 \\ 
0 & 0%
\end{array}%
\right] +\left[ 
\begin{array}{cc}
0 & 0 \\ 
0 & \varphi _{2}\left( d\right)%
\end{array}%
\right] +\Phi _{1}\left( \left[ 
\begin{array}{cc}
0 & b \\ 
c & 0%
\end{array}%
\right] \right) \\
&=&\left[ 
\begin{array}{cc}
\varphi _{1}\left( a\right) & 0 \\ 
0 & \varphi _{2}\left( d\right)%
\end{array}%
\right] +\left[ 
\begin{array}{cc}
0 & \varphi \left( b\right) \\ 
\varphi \left( c\right) ^{\ast } & 0%
\end{array}%
\right] \\
&=&\left[ 
\begin{array}{cc}
\varphi _{1}\left( a\right) & \varphi \left( b\right) \\ 
\varphi \left( c\right) ^{\ast } & \varphi _{2}\left( d\right)%
\end{array}%
\right]
\end{eqnarray*}%
for all $a,b,c,d\in \mathcal{A}$, and the theorem is proved.
\end{proof}

The following theorem is a local convex version of the Stinesping type
theorem for completely bounded maps on $C^{\ast }$-algebras.

\begin{theorem}
\label{3} Let $\mathcal{A}$ be a unital locally $C^{\ast }$-algebra, $%
\mathcal{E=\{H}_{n}\mathcal{\}}_{n}\ $be a quantized Fr\'{e}chet domain in a
Hilbert space $\mathcal{H}$ and $\varphi :\mathcal{A\rightarrow }C^{\ast
}\left( \mathcal{D}_{\mathcal{E}}\right) \ $be a local $\mathcal{CC}$-map.
Then there exist a quantized Frech\'{e}t domain $\mathcal{E}^{\varphi }%
\mathcal{=\{H}_{n}^{\varphi }\mathcal{\}}_{n}\ $in a Hilbert space $\mathcal{%
H}^{\varphi }$, a contractive unital $\ast $-morphism $\pi _{\varphi }:%
\mathcal{A\rightarrow }C^{\ast }\left( \mathcal{D}_{\mathcal{E}^{\varphi
}}\right) $, and two isometries $V_{i}:\mathcal{H}\rightarrow \mathcal{H}%
^{\varphi },$ $V_{i}\left( \mathcal{E}\right) \subseteq \mathcal{E}^{\varphi
},i=1,2$, such that 
\begin{equation*}
\varphi \left( a\right) =V_{1}^{\ast }\pi _{\varphi }\left( a\right) V_{2}
\end{equation*}%
for all $a\in \mathcal{A}$.
\end{theorem}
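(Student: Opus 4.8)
The plan is to combine Theorem \ref{2} with the Stinespring dilation theorem for local $\mathcal{CP}$-maps (Theorem \ref{s}). First I would apply Theorem \ref{2} to the given local $\mathcal{CC}$-map $\varphi$ to obtain a unital local $\mathcal{CP}$-map $\Phi : M_2(\mathcal{A}) \to C^{\ast}(\mathcal{D}_{\mathcal{E}\oplus\mathcal{E}})$ of the form $\Phi\!\left(\left[\begin{smallmatrix} a & b \\ c & d\end{smallmatrix}\right]\right) = \left[\begin{smallmatrix} \varphi_1(a) & \varphi(b) \\ \varphi(c)^{\ast} & \varphi_2(d)\end{smallmatrix}\right]$. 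Since $\Phi$ is unital and local $\mathcal{CP}$, it is in particular local $\mathcal{CCP}$ (using \cite[Corollary 4.1]{D}), so Theorem \ref{s} applies with $\varphi(1_{\mathcal{A}}) = \mathrm{id}$: there is a quantized domain $\{\mathcal{K}, \mathcal{F}, \mathcal{D}_{\mathcal{F}}\}$, an isometry $W : \mathcal{H}\oplus\mathcal{H} \to \mathcal{K}$ with $W(\mathcal{E}\oplus\mathcal{E})\subseteq\mathcal{F}$, and a unital local contractive $\ast$-homomorphism $\rho : M_2(\mathcal{A}) \to C^{\ast}(\mathcal{D}_{\mathcal{F}})$ with $\Phi(X) \subseteq W^{\ast}\rho(X)W$ for all $X \in M_2(\mathcal{A})$.

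Next I would extract the data of the theorem from this dilation. The matrix units $e_{11}, e_{22} \in M_2(\mathcal{A})$ are projections summing to $1$; set $Q_i = \rho(e_{ii})$, which are complementary projections in $C^{\ast}(\mathcal{D}_{\mathcal{F}})$ (compatible with the quantized structure). Define $\pi_{\varphi} : \mathcal{A} \to C^{\ast}(\mathcal{D}_{\mathcal{E}^{\varphi}})$ by compressing $\rho$ via the embedding $a \mapsto a\otimes e_{11}$ — more precisely, identify $\mathcal{A}$ with the corner $e_{11}M_2(\mathcal{A})e_{11}$ and let $\mathcal{H}^{\varphi}$ be the reduction of $\mathcal{K}$ by $Q_1$, with $\mathcal{E}^{\varphi}$ the corresponding reduced quantized family. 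Because $\rho$ is a $\ast$-homomorphism, $\pi_{\varphi}(a) = Q_1\rho(a\otimes e_{11})Q_1|_{\ldots}$ restricted to $Q_1\mathcal{K}$ is again a unital contractive $\ast$-homomorphism. Now write $W$ in block form relative to the two copies of $\mathcal{H}$: $W\xi_1 = $ image of $(\xi_1,0)$ under $W$, and compose with $Q_i$ to define $V_i : \mathcal{H} \to \mathcal{H}^{\varphi}$ by $V_i\xi = Q_1 W(\xi, 0)$ for $i=1$ and $V_2\xi = Q_1 W(0,\xi)$ — one checks $V_i$ is an isometry (since $W$ is and $e_{ii}$ is a projection, so $Q_i W$ restricted appropriately is isometric on the $i$-th summand) and $V_i(\mathcal{E}) \subseteq \mathcal{E}^{\varphi}$ from the condition $W(\mathcal{E}\oplus\mathcal{E}) \subseteq \mathcal{F}$.

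The key computation is then that for $a \in \mathcal{A}$, looking at the $(1,2)$-entry of $\Phi\!\left(\left[\begin{smallmatrix} 0 & a \\ 0 & 0\end{smallmatrix}\right]\right) \subseteq W^{\ast}\rho\!\left(\left[\begin{smallmatrix} 0 & a \\ 0 & 0\end{smallmatrix}\right]\right)W$ and using $\rho\!\left(\left[\begin{smallmatrix} 0 & a \\ 0 & 0\end{smallmatrix}\right]\right) = \rho(e_{11})\rho(a\otimes e_{12})\rho(e_{22}) = Q_1\rho(a\otimes e_{12})Q_2$ together with the $\ast$-homomorphism property $Q_1 \rho(a\otimes e_{12})Q_2 = Q_1\rho(a\otimes e_{11})\rho(e_{12})Q_2$, one identifies $\varphi(a) = V_1^{\ast}\pi_{\varphi}(a)V_2$ after the natural identifications (the "twist" $e_{12}$ gets absorbed into the relation between $V_2$ and the second summand of $W$). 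The main obstacle I anticipate is the bookkeeping around the unbounded operators and domains: one must check carefully that all the compressions $Q_i(\,\cdot\,)Q_j$ make sense as operators in the relevant $C^{\ast}(\mathcal{D}_{\cdot})$, that $\mathcal{E}^{\varphi}$ is genuinely a quantized Fréchet domain (countability is inherited from $\mathcal{E}$), and that the inclusions "$\subseteq$" in Theorem \ref{s} can be turned into genuine equalities on the dense domains — but since $\varphi(1_{\mathcal{A}}) = \mathrm{id}$ forces $W$ to be an isometry, the usual Stinespring bookkeeping goes through, and the $2\times 2$-matrix trick reduces everything to the already-established $\mathcal{CP}$ case.
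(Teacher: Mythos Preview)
Your overall strategy---apply Theorem~\ref{2} to obtain a unital local $\mathcal{CP}$-map $\Phi$ on $M_2(\mathcal{A})$, dilate $\Phi$ via Theorem~\ref{s}, and then extract $\pi_\varphi, V_1, V_2$ from the dilation using the matrix units $e_{ij}$---is sound and matches the paper's opening move. However, your explicit definition of $V_2$ contains a genuine error: with $V_2\xi = Q_1 W(0,\xi)$ one computes, using $W^{\ast}Q_1W = \Phi(e_{11}) = \left[\begin{smallmatrix}1 & 0\\ 0 & 0\end{smallmatrix}\right]$, that $\|V_2\xi\|^2 = \langle Q_1 W(0,\xi), W(0,\xi)\rangle = 0$, so your $V_2$ is the zero map, not an isometry. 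The twist by $e_{12}$ that you allude to cannot be ``absorbed'' after the fact; it must be built into $V_2$ from the start. The correct choice is $V_2\xi = \rho(e_{12})W(0,\xi)$, which lands in $Q_1\mathcal{K}=\mathcal{H}^\varphi$ (since $e_{12}=e_{11}e_{12}$) and is isometric because $\rho(e_{12})^{\ast}\rho(e_{12})=\rho(e_{22})=Q_2$ and $W^{\ast}Q_2W = \left[\begin{smallmatrix}0 & 0\\ 0 & 1\end{smallmatrix}\right]$. With this correction the identity $V_1^{\ast}\pi_\varphi(a)V_2 = \varphi(a)$ follows from $\rho(a\otimes e_{11})\rho(e_{12})=\rho(a\otimes e_{12})$ and $W^{\ast}\rho(a\otimes e_{12})W=\Phi\!\left(\left[\begin{smallmatrix}0&a\\0&0\end{smallmatrix}\right]\right)$.

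For comparison, after obtaining the minimal Stinespring dilation of $\Phi$ the paper takes a different route: rather than performing the matrix-unit decomposition directly in the unbounded setting, it restricts to the bounded part $b(\mathcal{A})$, invokes the classical $C^{\ast}$-algebra result \cite[Theorem~8.4]{P} to obtain $\pi:b(\mathcal{A})\to B(\mathcal{H}^\varphi)$ together with the isometries $V_1,V_2$, then builds the quantized domain by hand as $\mathcal{H}_n^\varphi = \overline{\mathrm{span}}\{\pi(a)V_1\xi + \pi(b)V_2\eta : a,b\in b(\mathcal{A}),\ \xi,\eta\in\mathcal{H}_n\}$, and finally extends $\pi$ to all of $\mathcal{A}$ by density of $b(\mathcal{A})$ and local contractivity. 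This sidesteps the domain bookkeeping you flagged as an obstacle, at the cost of passing through the bounded theory; your direct approach is more self-contained but, as you anticipated, requires more care in verifying that the compressions live in the right $C^{\ast}(\mathcal{D}_{\cdot})$.
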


\begin{proof}
By Theorem \ref{2}, there exist two unital local $\mathcal{CP}$- maps $%
\varphi _{i}:A\rightarrow C^{\ast }\left( \mathcal{D}_{\mathcal{E}}\right)
,i=1,2,$ such that $\Phi :M_{2}\left( \mathcal{A}\right) \rightarrow C^{\ast
}\left( \mathcal{D}_{\mathcal{E\oplus E}}\right) \ $given by 
\begin{equation*}
\Phi \left( \left[ 
\begin{array}{cc}
a & b \\ 
c & d%
\end{array}%
\right] \right) =\left[ 
\begin{array}{cc}
\varphi _{1}\left( a\right) & \varphi \left( b\right) \\ 
\varphi \left( c\right) ^{\ast } & \varphi _{2}\left( d\right)%
\end{array}%
\right]
\end{equation*}%
is a unital local $\mathcal{CP}$-map. Let $\left( \Pi ,\{\mathcal{K}^{\Phi },%
\mathcal{E}^{\Phi },\mathcal{D}_{\mathcal{E}^{\Phi }}\},V^{\Phi }\right) ,$
where $\mathcal{E}^{\Phi }=$ $\{\mathcal{K}_{n}^{\Phi }\}_{n},$ be a minimal
Stinespring dilation of $\Phi .$ Since $\Phi $ is unital, $V$ is an
isometry. Then $\left( \left. \Pi \right\vert _{M_{2}\left( b(\mathcal{A}%
)\right) },\mathcal{K}^{\Phi },V^{\Phi }\right) $ is a Stinespring dilation
of $\left. \Phi \right\vert _{M_{2}(b(\mathcal{A}))}=\left[ 
\begin{array}{cc}
\left. \varphi _{1}\right\vert _{b(\mathcal{A})} & \left. \varphi
\right\vert _{b(\mathcal{A})} \\ 
\left. \varphi ^{\ast }\right\vert _{b(\mathcal{A})} & \left. \varphi
_{2}\right\vert _{b(\mathcal{A})}%
\end{array}%
\right] $, where $\varphi ^{\ast }\left( a\right) =\varphi \left( a\right)
^{\ast }$, and by \cite[Theorem 8.4]{P}, there exist a Hilbert space $%
\mathcal{H}^{\varphi }$, two isometries $V_{1},V_{2}\in B(\mathcal{H},%
\mathcal{H}^{\varphi })$ and a unital $\ast $-morphism $\pi :b(\mathcal{A}%
)\rightarrow B(\mathcal{H}^{\varphi })$ such that 
\begin{eqnarray*}
\Phi \left( \left[ 
\begin{array}{cc}
a & b \\ 
c & d%
\end{array}%
\right] \right) &=&\left[ 
\begin{array}{cc}
\varphi _{1}\left( a\right) & \varphi \left( b\right) \\ 
\varphi \left( c\right) ^{\ast } & \varphi _{2}\left( d\right)%
\end{array}%
\right] \\
&=&\left[ 
\begin{array}{cc}
V_{1}^{\ast } & 0 \\ 
0 & V_{2}^{\ast }%
\end{array}%
\right] \left[ 
\begin{array}{cc}
\pi \left( a\right) & \pi \left( b\right) \\ 
\pi \left( c\right) ^{\ast } & \pi \left( d\right)%
\end{array}%
\right] \left[ 
\begin{array}{cc}
V_{1} & 0 \\ 
0 & V_{2}%
\end{array}%
\right] \\
&=&\left[ 
\begin{array}{cc}
V_{1}^{\ast }\pi \left( a\right) V_{1} & V_{1}^{\ast }\pi \left( b\right)
V_{2} \\ 
V_{2}^{\ast }\pi \left( c\right) ^{\ast }V_{1} & V_{2}^{\ast }\pi \left(
d\right) V_{2}%
\end{array}%
\right]
\end{eqnarray*}%
for all $a,b,c,d\in b(\mathcal{A)}$. Moreover\textbf{,}$\ \mathcal{H}%
^{\varphi }=\overline{\text{span}}\{\pi \left( a\right) V_{1}\xi +\pi \left(
b\right) V_{2}\eta ;a,b\in b(\mathcal{A)},\xi ,\eta \in \mathcal{H}\}$.
Therefore, $\varphi \left( a\right) =V_{1}^{\ast }\pi \left( a\right) V_{2}\ 
$for all $a\in b(\mathcal{A)}.$

For each $n,\ $let $\mathcal{H}_{n}^{\varphi }=\overline{\text{span}}\{\pi
\left( a\right) V_{1}\xi +\pi \left( b\right) V_{2}\eta ;a,b\in b(\mathcal{A)%
},\xi ,\eta \in \mathcal{H}_{n}\}$. Clearly, $\mathcal{H}_{n}^{\varphi }$ is
a Hilbert subspace of $\mathcal{H}^{\varphi }$ and $\mathcal{E}^{\varphi }=\{%
\mathcal{H}_{n}^{\varphi }\}_{n}\ $is a quantized domain in $\mathcal{H}%
^{\varphi }.\ $Moreover, $V_{i}\left( \mathcal{H}_{n}\right) \subseteq 
\mathcal{H}_{n}^{\varphi },i=1,2$ and $\pi \left( a\right) \left( \mathcal{H}%
_{n}^{\varphi }\right) \subseteq \mathcal{H}_{n}^{\varphi }$ for all $a\in b(%
\mathcal{A)}$ and for all $n$. Therefore, $V_{i}\left( \mathcal{E}\right)
\subseteq \mathcal{E}^{\varphi },i=1,2$, and $\pi \left( a\right) \in
C^{\ast }\left( \mathcal{D}_{\mathcal{E}^{\varphi }}\right) \cap B(\mathcal{H%
}^{\varphi })$ for all $a\in b(\mathcal{A)}$. Since $\Pi $ is a unital local 
$\ast $ -morphism, for each $n$, there exists $\lambda _{n}\in \Lambda $
such that 
\begin{equation*}
\left\Vert \left. \pi \left( a\right) \right\vert _{\mathcal{H}_{n}^{\varphi
}}\right\Vert \leq \left\Vert \left. \Pi \left( \left[ 
\begin{array}{cc}
a & 0 \\ 
0 & 0%
\end{array}%
\right] \right) \right\vert _{\mathcal{H}_{n}^{\varphi }\oplus \mathcal{H}%
_{n}^{\varphi }}\right\Vert \leq p_{\lambda _{n}}^{2}\left( \left[ 
\begin{array}{cc}
a & 0 \\ 
0 & 0%
\end{array}%
\right] \right) =p_{\lambda _{n}}\left( a\right)
\end{equation*}%
for all $a\in b(\mathcal{A)}$. From the above relation, we deduce that the
unital $\ast $-morphism $\pi :b(\mathcal{A})\rightarrow B(\mathcal{H}%
^{\varphi })$ is continuous with respect to the families of $C^{\ast }$%
-seminorms $\{\left. p_{\lambda }\right\vert _{b(\mathcal{A})}\}_{\lambda
\in \Lambda }$ and $\{\left\Vert \cdot \right\Vert _{B(\mathcal{H}%
_{n}^{\varphi })}\}_{n}$. Therefore, $\pi $ extends to a unital local $\ast $
-morphism from $\mathcal{A}$ to $C^{\ast }\left( \mathcal{D}_{\mathcal{E}%
^{\varphi }}\right) $, denoted by $\pi _{\varphi }$.

From $\varphi \left( a\right) =V_{1}^{\ast }\pi \left( a\right) V_{2}\
=V_{1}^{\ast }\pi _{\varphi }\left( a\right) V_{2}\ $for all $a\in b(%
\mathcal{A)}$ and taking into account that $\varphi $ and $\pi _{\varphi }$
are local contractive, we deduce that 
\begin{equation*}
\varphi \left( a\right) =V_{1}^{\ast }\pi _{\varphi }\left( a\right) V_{2}\ 
\end{equation*}%
for all $a\in \mathcal{A}.$
\end{proof}

\begin{remark}
It is known the Wittstock's decomposition theorem for bounded operator
valued completely bounded maps on $C^{\ast }$ that states that any
completely bounded map $\varphi $ from a $C^{\ast }$-algebra $\mathcal{A}$
to $B(\mathcal{H})$ is the sum of four completely positive maps. Using
Theorem \ref{3}, and following the proof of Wittstock's decomposition
theorem \cite[Theorem 8.5]{P}, we obtain a similar result for unbounded
local $\mathcal{CC}$-maps: any local $\mathcal{CC}$-map $\varphi $ from a
unital locally $C^{\ast }$-algebra $\mathcal{A}$ to $C^{\ast }\left( 
\mathcal{D}_{\mathcal{E}}\right) $, where $\mathcal{E=\{H}_{n}\mathcal{\}}%
_{n}\ $is a quantized Fr\'{e}chet domain in a Hilbert space $\mathcal{H}$,
is the sum of four local $\mathcal{CC}$-maps.
\end{remark}

\end{document}